\documentclass[final,hidelinks,onefignum,onetabnum]{siamart251216}
\usepackage{amsmath,amssymb}
\usepackage{booktabs}
\usepackage{float}

\newtheorem{assumption}{Assumption}
\newtheorem{remark}{Remark}

\headers{Positivity for FEM Richards Equation}{Benfanich, Bourgault, and Beljadid}
\title{Discrete positivity and maximum principles for a finite element discretization of the Richards equation}
\author{
Abderrahmane Benfanich\thanks{Department of Mathematics and Statistics, University of Ottawa, Ottawa, ON K1N~6N5, Canada (\email{abenf099@uottawa.ca}). Corresponding author.}
\and Yves Bourgault\thanks{Department of Mathematics and Statistics, University of Ottawa, Ottawa, ON K1N~6N5, Canada (\email{ybourg@uottawa.ca}). This work was supported by NSERC Discovery Grant RGPIN-2019-06855.}
\and Abdelaziz Beljadid\thanks{Department of Mathematics and Statistics, University of Ottawa, Ottawa, ON K1N~6N5, Canada, and University Mohammed VI Polytechnic, Benguerir 43150, Morocco (\email{Abdelaziz.BELJADID@um6p.ma}, \email{abeljadi@uottawa.ca}). This work was supported by NSERC Discovery Grant RGPIN/5220-2022 and NSERC DGECR/526-2022.}
}

\begin{document}

\maketitle

\begin{abstract}
Standard finite element discretizations of the Richards equation may violate the discrete minimum principle, producing unphysical negative saturations. While existing bound-preserving methods typically rely on computationally expensive fully implicit solvers, we propose a novel semi-implicit finite element framework utilizing a bounded continuous auxiliary variable. Our approach treats the gravity-driven advective term using a linearly implicit technique, which improves the time-step restrictions required by explicit gravity methods near the degenerate limit. We provide rigorous mathematical proofs establishing sufficient geometric and algebraic constraints for discrete positivity and the discrete maximum principle, specifically a local P\'eclet condition and a discrete row-sum condition. When both conditions are satisfied on weakly acute meshes with mass lumping, our framework ensures that numerical solutions strictly respect physical bounds across highly degenerate conditions and initially dry soil regimes. Comprehensive numerical validation demonstrates the method across multiple flow regimes, including cases where algebraic conditions are satisfied, violated, and recovered through mesh refinement.
\end{abstract}

\begin{keywords}
Richards equation, discrete maximum principle, positivity preservation, finite element method, semi-implicit time discretization, degenerate parabolic equations
\end{keywords}

\begin{MSCcodes}
65M12, 65M60, 76S05
\end{MSCcodes}

\section{Introduction}
The Richards equation is the standard mathematical model used for simulating water infiltration in unsaturated porous media \cite{richards1931, zha2019}. This degenerate parabolic partial differential equation features a highly non-linear diffusive term and a gravity-driven advective term, taking the continuous form of a quasilinear elliptic-parabolic equation \cite{alt1983}. The fundamental theory of capillary conduction and the continuous model have been thoroughly established in early literature \cite{richards1931}, commonly utilizing the pressure-based, saturation-based, or mixed formulations \cite{celia1990, zha2019}. Each formulation presents its own computational drawbacks \cite{zha2019, farthing2017}. The pressure-based form generally produces significant global mass balance errors unless prohibitively small time steps or mass-conservative spatial/temporal schemes are utilized \cite{celia1990, rathfelder1994, zha2017}, while the saturation-based formulation circumvents these mass balance issues in the dry limit but suffers from unbounded soil-water diffusivity in fully saturated regions \cite{kirkland1992, hills1989, zha2019}. The mixed formulation mitigates many mass conservation issues but still requires careful numerical handling to address its inherent nonlinearities \cite{celia1990, kavetski2001, zha2019}. Notably, Celia et al. \cite{celia1990} were among the first to address the mixed formulation by successfully applying a mass-conservative modified Picard iteration method to solve infiltration problems.

To address the numerical difficulties arising from the degeneracy and non-smoothness of the Richards equation, several stabilization strategies have been developed over the years \cite{zha2019, farthing2017}. Standard iterative solvers using the Newton-Raphson method or standard Picard iterations often fail to converge or require severe time-step restrictions near sharp wetting fronts where derivatives vanish or diverge \cite{list2016, jones2001, lehmann1998, paniconi1991}. Primary variable switching dynamically selects either the pressure head or the saturation as the primary unknown to circumvent these issues \cite{diersch1999, wuforsyth2001, forsyth1995, zha2019}. Under a Newton-Raphson framework, this technique typically takes the derivative with respect to pressure head for saturated nodes and saturation for unsaturated nodes to mitigate extreme non-linearities \cite{krabbenhoft2007, brunner2012}. However, as highlighted in comprehensive reviews \cite{zha2019, farthing2017}, variable switching approaches often suffer from non-smooth, dynamic transitions between primary variables that can lead to physically unrealistic solutions \cite{krabbenhoft2007, zha2019, zha2017}. While improvements such as hybrid and generalized switching criteria have been proposed to minimize these deficiencies \cite{hassane2017, zeng2018}, the performance of these techniques tends to remain highly problem-specific \cite{zha2019}. Another widely used strategy is mathematical regularization, which replaces degenerate constitutive relationships with smoothed approximations, inherently introducing artificial modeling parameters \cite{schweizer2007, popschweizer2011, fevotte2024}. Stabilized fixed-point iterations like the L-scheme and the modified L-scheme guarantee global convergence for degenerate problems without relying strictly on the derivative of the nonlinearity \cite{pop2004, slodicka2002, mitra2019, seus2018}. As a recent alternative to mathematical transformation techniques that might introduce unfavorable discontinuity \cite{zha2019, chen2016}, formulations utilizing bounded continuous auxiliary variables have been proposed to naturally handle both fully saturated and completely dry regimes \cite{benfanich2025, benfanich2026a}. This transformation maps potentially unbounded physical variables into a strictly bounded domain to eliminate unbounded terms from the governing equations, effectively removing the need for mathematical regularization \cite{benfanich2025}.

However, a persistent and significant challenge for numerical methods of the Richards equation is maintaining intrinsically the physical bounds of the solution \cite{barrenechea2024, zha2019}. Physical consistency requires that numerical approximations strictly respect both non-negativity and maximum saturation bounds \cite{misiats2013, svyatskiy2017}. The mathematical formulation of a numerical method's ability to respect these physical bounds is known as the discrete maximum principle \cite{barrenechea2024, karatson2007}. In many applications where convection or gravity dominates diffusion, standard direct finite element and finite volume discretizations fail to satisfy the discrete maximum principle \cite{barrenechea2024, svyatskiy2017}. To strictly respect physical bounds, the spatial and temporal discretizations must satisfy specific algebraic constraints \cite{barrenechea2024, farago2006}. For linear finite element methods, the discrete system matrix must generally be of nonnegative type or an M-matrix, which mathematically requires non-positive off-diagonal entries and non-negative row sums \cite{berman1994, varga2000, stoyan1986}. Geometrically, this requires evaluating the spatial discretization on weakly acute or Xu-Zikatanov meshes \cite{barrenechea2024, xu1999}. Furthermore, mass lumping is an essential integration technique necessary to discretize transient parabolic problems; it diagonalizes the mass matrix through row summation and prevents positive off-diagonal entries from violating the discrete maximum principle \cite{thomee2006, celia1990}.

Because standard first-order bound-preserving schemes severely degrade spatial accuracy and smear infiltration fronts, researchers have developed algebraic flux correction and flux-corrected transport methodologies \cite{forsyth1997, oulhaj2018, misiats2013}. In the finite volume context, second-order accurate monotone schemes utilizing multi-point flux approximations and nonlinear solution-dependent stencils have been proposed, strictly satisfying the discrete maximum principle for the Richards equation on unstructured meshes \cite{misiats2013, svyatskiy2017, lipnikov2012}. Similarly, originally introduced for finite difference methods \cite{boris1973} and later extended to multidimensional unstructured meshes \cite{zalesak1979, kuzmin2012}, flux-corrected transport schemes blend the bound-preserving properties of low-order predictors with the high-resolution accuracy of high-order methods by limiting antidiffusive fluxes using multidimensional limiters \cite{boris1973, zalesak1979, kuzmin2012}. A recent study has successfully extended these flux-corrected transport schemes to the nonlinear, degenerate parabolic structure of the Richards equation, demonstrating optimal second-order convergence on unstructured meshes while strictly preserving the physical bounds of the low-order scheme \cite{barua2026}.

In this paper, we explicitly address the limitations of current bound-preserving schemes by establishing novel frameworks for semi-implicit time discretizations. While previous literature successfully enforces bounds via mass lumping, upwinding techniques, and high-order flux limiting, rigorous theoretical proofs of nonlinear stability, positivity, and the discrete maximum principle have been established almost exclusively for fully implicit time discretizations \cite{forsyth1997, oulhaj2018, barua2026, svyatskiy2017}. Fully implicit methods are unconditionally stable but require computationally expensive nonlinear solvers at each time step \cite{shahraiyni2012, kavetski2002, zha2019}. In contrast, our paper studies semi-implicit methods to solve the Richards equation utilizing the bounded auxiliary variable approach \cite{benfanich2025, keita2021}. A key novelty of this work is the introduction of a method that treats the gravity term linearly implicitly. While explicit gravity method treatments of advection require severe critical time-step restrictions to maintain non-negativity near the degenerate limit \cite{hills1989, kavetski2002}, treating gravity linearly implicitly significantly weakens the condition on the time step. We rigorously establish sufficient geometric and algebraic conditions, specifically local Péclet and row-sum conditions \cite{barrenechea2024}, to guarantee discrete maximum principle and positivity for this linearly implicit formulation without relying on fully implicit solvers. Comprehensive numerical validation across diffusion-dominated, mixed, and advection-dominated flow regimes demonstrates the sufficiency of the theoretical conditions.

The remainder of this article is structured as follows. Section~\ref{sec:model} introduces the continuous Richards equation with initial and boundary conditions. Section~\ref{sec:discretization} presents the complete discretization framework, including temporal and spatial discretizations, finite element spaces, mass lumping integration, and the geometric and algebraic properties required for maximum principles on weakly acute meshes. Section~\ref{sec:analysis} develops the theoretical analysis to establish discrete positivity and maximum principle preservation, progressing from the explicit gravity method through the linearly implicit advection scheme to the sufficient conditions (Péclet and row-sum) that guarantee discrete bounds. Finally, Section~\ref{sec:numerical} provides comprehensive numerical validation across multiple flow regimes: (1) diffusion-dominated scenarios where conditions hold and bounds are preserved, (2) mixed regimes with moderate violations, and (3) mesh refinement studies demonstrating recovery of bounds through spatial resolution.

\section{Richards' equation}\label{sec:model}
The Richards equation \cite{richards1931} is the standard model for unsaturated flow in porous media. Written in its mixed formulation \cite{alt1983, celia1990}, and expressed using a bounded auxiliary variable $u$ \cite{benfanich2025, benfanich2026a}, it takes the form:
\begin{equation}
\partial_t \theta(u) - \nabla \cdot (K(u) \nabla u) - \nabla \cdot (\overline{K}(u) \mathbf{e}_z) = 0 \quad \text{in } \Omega \times (0,T]
\end{equation}
Here, $\theta(u) \in [0,1]$ is the effective saturation, a monotonically increasing function of the auxiliary variable $u$, and $K(u) \ge 0$ and $\overline{K}(u) \ge 0$ are the diffusive and advective hydraulic conductivity coefficients, respectively.
Following \cite{benfanich2026a}, we extend the definition of the nonlinear functions to the entire real line by setting $\theta(u) = u$ for $u < 0$ and for the remainder of the real line, by symmetry about the point of saturation $(u^*, 1)$ where $\theta(u^*) = 1$. We extend the function $K(u)$ to the entire real line by setting $K(u) = K(u^*)$ for $u > u^*$ and $K(u) = K(-u)$ for $u < 0$. We extend the function $\overline{K}$ similarly.
\subsection{Initial and boundary conditions}\label{sec:ic-bc}
For the bounded domain $\Omega \subset \mathbb{R}^d$ with boundary $\partial \Omega$, the continuous problem requires the following conditions:
\begin{itemize}
  \item \textbf{Initial condition:} At time $t=0$, an initial profile $u_0(\mathbf{x})$ is prescribed for $\mathbf{x} \in \Omega$.
  \item \textbf{Boundary conditions:} Dirichlet boundary conditions are imposed on the entire boundary $\partial \Omega$.
\end{itemize}
In our numerical framework, the discrete boundary data $u_{b,h}$ is the finite element extension of the continuous boundary data $u_b$, with positive values on boundary nodes and zero values on interior nodes.

\section{Discretization framework}\label{sec:discretization}

\subsection{Time discretization}\label{sec:time}
Let $[0,T]$ be the time interval. We define a temporal partition
 $0=t_0<t_1<\cdots<t_N=T$. For dynamic time stepping, we set
 $\tau_n:=t_n-t_{n-1}>0$ for $n=1,\ldots,N$, and equivalently
 $t_n=\sum_{k=1}^{n}\tau_k$ for $n=1,\ldots,N$.
The numerical approximation of the solution at time $t_n$ is denoted by $u^n$.

\subsection{Spatial discretization}\label{sec:spatial}
Let the domain $\Omega \subset \mathbb{R}^d$ be a polyhedron represented by a conforming, shape-regular simplicial mesh $\mathcal{T}_h$.
\begin{itemize}
    \item Let $T \in \mathcal{T}_h$ denote an individual element.
    \item Let $h_T$ be the diameter of element $T$, and $h = \max_{T \in \mathcal{T}_h} h_T$ be the global mesh size parameter.
    \item Let $\mathcal{N}_h = \{x_i\}_{i=1}^{N_h}$ denote the set of all interior nodal vertices in the mesh $\mathcal{T}_h$.
\end{itemize}

\subsection{Finite element space}\label{sec:fes}
We define the standard conforming finite element space $V_h \subset \mathcal{H} = H^1_0(\Omega)$ consisting of continuous, piecewise linear functions ($\mathbb{P}_1$ elements):
$$ V_h = \{ v_h \in C^0(\overline{\Omega}) : v_h|_T \in \mathbb{P}_1(T) \ \forall T \in \mathcal{T}_h, \ v_h|_{\partial \Omega} = 0 \}. $$
We associate this space with the standard nodal basis functions $\{\phi_i\}_{i=1}^{N_h}$, uniquely defined by the property $\phi_i(x_j) = \delta_{ij}$. 

\subsection{Nodal interpolation operator}\label{sec:interpolation}
Because the nonlinear function $\theta(u_h)$ evaluated on a finite element function does not generally belong to $V_h$, we define the standard nodal interpolation operator $\mathcal{I}_h : C^0(\overline{\Omega}) \to V_h$ as:
$$ \mathcal{I}_h(v)(x) = \sum_{i=1}^{N_h} v(x_i) \phi_i(x). $$

\subsection{Numerical integration using mass lumping}\label{sec:masslumping}
To satisfy the algebraic conditions needed for a discrete maximum principle, as is standard practice for transient parabolic problems \cite{thomee2006}, we introduce the lumped $L^2$ inner product $(\cdot, \cdot)_h$, defined by applying the nodal interpolation operator before integration:
$$ (u, v)_h = \int_\Omega \mathcal{I}_h(u v) \, dx. $$
For two discrete functions $u_h, v_h \in V_h$, the cross terms vanish, yielding:
$$ (u_h, v_h)_h = \sum_{i=1}^{N_h} u_h(x_i) v_h(x_i) \int_\Omega \phi_i(x) \, dx = \sum_{i=1}^{N_h} m_i U_i V_i, $$
where $m_i = \int_\Omega \phi_i(x) \, dx > 0$ is the lumped mass associated with a node $i$, and $U_i, V_i$ represent the corresponding nodal values.

\subsection{Global mesh and matrix properties}\label{sec:mesh}
Before defining the time-stepping schemes, we establish the fundamental geometric properties required for the spatial discretization to satisfy maximum principles.

\begin{assumption}[Weakly acute mesh condition \cite{barrenechea2024}, Def. 2.2 \& Eq. (2.15)]\label{ass:weakly-acute}
We assume that the triangulation $\mathcal{T}_h$ is weakly acute, and equivalently for $\mathbb P_1$ elements the basis functions satisfy:
$$ \nabla \phi_i \cdot \nabla \phi_j \le 0 \quad \text{for all } i \neq j \text{ on every } T \in \mathcal{T}_h. $$
\end{assumption}

For $\mathbb P_1$ elements, gradients are constant on each simplex $T \in \mathcal{T}_h$. Using Eq. (2.15) in \cite{barrenechea2024}, we directly obtain
$$
\nabla\phi_i\cdot\nabla\phi_j = -\frac{|F_i^T|\,|F_j^T|}{d^2|T|^2}\cos\theta_E^T \qquad (i\neq j).
$$
Here, $F_i^T, F_j^T$ are the facets of $T$ opposite vertices $i, j$, $d$ is the spatial dimension, and $\theta_E^T$ is the interior dihedral angle between the facets $F_i^T$ and $F_j^T$.
Since the prefactor is strictly positive, $\nabla\phi_i\cdot\nabla\phi_j\le 0 \iff \cos\theta_E^T\ge 0 \iff \theta_E^T\le \pi/2$, which is the weakly acute condition.

Let the lagged-coefficient stiffness matrix $A^{n-1} = (A_{ij}^{n-1})_{1 \le i, j \le N_h}$ be defined by the entries $A_{ij}^{n-1} = \int_\Omega K(u_h^{n-1}) \nabla \phi_j \cdot \nabla \phi_i \, dx$. This matrix satisfies the following critical property:

\begin{lemma}[Matrix of nonnegative type \cite{barrenechea2024}, Def. 3.2]\label{lem:nonnegative-type}
Since $K(u) \ge 0$, Assumption~\ref{ass:weakly-acute} guarantees that the off-diagonal entries are non-positive: $A_{ij}^{n-1} \le 0$ for all $i \neq j$. Furthermore, the row sums of the matrix are non-negative: $\sum_{j=1}^{N_h} A_{ij}^{n-1} \ge 0$ for all $i$. Together, these two conditions guarantee that $A^{n-1}$ is a matrix of nonnegative type.
\end{lemma}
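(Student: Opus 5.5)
The off-diagonal sign follows element by element. Since $A^{n-1}_{ij}=\sum_{T\in\mathcal{T}_h}\int_T K(u_h^{n-1})\,\nabla\phi_j\cdot\nabla\phi_i\,dx$ and $\nabla\phi_i,\nabla\phi_j$ are constant on each simplex, I write each contribution as
\[
\bigl(\nabla\phi_i\cdot\nabla\phi_j\bigr)\big|_T\int_T K(u_h^{n-1})\,dx .
\]
For $i\neq j$, the first factor is $\le 0$ by Assumption~\ref{ass:weakly-acute}, equivalently by the dihedral-angle formula $-\frac{|F_i^T||F_j^T|}{d^2|T|^2}\cos\theta_E^T$ with $\theta_E^T\le\pi/2$. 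The second factor is $\ge 0$ because $K\ge0$; the extension of $K$ to all of $\mathbb{R}$ keeps $K\ge0$, so negative nodal values cause no problem. Summing over elements gives $A^{n-1}_{ij}\le 0$. Only elements containing both $x_i$ and $x_j$ contribute.

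For the row sums, I use the partition of unity. Let $\mathcal{N}_h^\partial$ denote the boundary vertices. On each element, $\sum_{j=1}^{N_h}\phi_j+\sum_{k\in\mathcal{N}_h^\partial}\phi_k\equiv 1$, so
\[
\sum_{j=1}^{N_h}\nabla\phi_j=-\sum_{k\in\mathcal{N}_h^\partial}\nabla\phi_k .
\]
Hence
\[
\sum_{j=1}^{N_h}A^{n-1}_{ij}=-\sum_{k\in\mathcal{N}_h^\partial}\int_\Omega K(u_h^{n-1})\,\nabla\phi_k\cdot\nabla\phi_i\,dx .
\]
For an interior index $i$ we always have $k\neq i$. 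The same elementwise argument as above then shows that each integral $\int_\Omega K\,\nabla\phi_k\cdot\nabla\phi_i$ is $\le0$. The row sum is therefore a sum of nonnegative terms. It vanishes for nodes not adjacent to the boundary and is positive or zero otherwise.

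The main point of care is this bookkeeping in the second step. The matrix is indexed only by interior nodes, so the "missing" columns of the full stiffness matrix are exactly the boundary hat functions. Their couplings to row $i$ are nonpositive, and once removed they leave a nonnegative remainder. With both properties in hand, the statement follows directly from Definition 3.2 of \cite{barrenechea2024}: nonpositive off-diagonal entries and nonnegative row sums.
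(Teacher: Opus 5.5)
Your proof is correct and follows the same route as the paper. The off-diagonal sign comes elementwise from the weakly acute condition (constant gradients on each simplex) together with $K\ge 0$. The row-sum bound comes from the partition of unity, which rewrites $\sum_{j\in I}A_{ij}^{n-1}$ as $-\sum_{j\in\Gamma}A_{ij}^{n-1}\ge 0$; the paper does exactly this in Step~1 of the proof of Theorem~\ref{thm:strict-positivity}. You also state explicitly that the boundary couplings $A_{ij}^{n-1}$, $j\in\Gamma$, are covered by the same elementwise argument, which is a welcome clarification since the lemma as stated indexes $A^{n-1}$ only over interior nodes.
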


\section{Discrete positivity and maximum principles}\label{sec:analysis}

\subsection{Fully discrete variational scheme using the explicit gravity method}\label{sec:explicit}
We consider a semi-implicit time discretization where the diffusion term is treated in lagged-coefficient (linearly implicit) form and the advective term is evaluated using the explicit gravity method. At the absolute degenerate limit ($u \to 0$), the function $\theta(u) \to 0$, which forces the explicit gravity method time-step restriction to vanish for nodes adjacent to a homogeneous zero boundary. 

To prevent the allowable time step from vanishing, we impose a strictly positive Dirichlet boundary condition. We define the corresponding affine finite element manifold.

Let $W_h \subset H^1(\Omega)$ be the unconstrained finite element space defined by:
$$ W_h = \{ v_h \in C^0(\overline{\Omega}) : v_h|_T \in \mathbb{P}_1(T) \ \forall T \in \mathcal{T}_h \}. $$
The test space with homogeneous boundary conditions is the standard subspace $V_h = W_h \cap H^1_0(\Omega)$. 

For this section, let the nodal set be partitioned as $I \cup \Gamma$, where $I$ is the set of interior nodes and $\Gamma$ is the set of Dirichlet boundary nodes.

We assume the boundary data $u_b$ is the strictly positive trace of an $H^1(\Omega)$ function, such that $u_b \in H^{1/2}(\partial \Omega)$ and $u_b \ge u_{\min} > 0$ almost everywhere on $\partial \Omega$. We define its discrete finite element extension $u_{b,h} \in W_h$ as:
$$ u_{b,h}(x) = \sum_{j \in \Gamma} u_b(x_j) \phi_j(x), $$
which strictly satisfies $u_{b,h}(x_j) > 0$ on the boundary nodes $\Gamma$ and vanishes on all interior nodes $I$.

\begin{assumption}[Initial and boundary compatibility]\label{ass:init-boundary}
To ensure that the time step $\tau_n$ is well-defined, the boundary condition and the initial discrete state $U^0 \in W_h$ must be compatible and bounded from below by the strictly positive physical constant $u_{\min} > 0$:
$$ u_b(x) \ge u_{\min} > 0 \text{ on } \partial \Omega, \quad \text{and} \quad U_i^0 \ge u_{\min} > 0 \text{ for all } i \in I. $$
\end{assumption}

For a given discrete time $t_n$ and previous solution $u_h^{n-1} \in W_h$ (with $u_h^{n-1}|_{\partial \Omega} = u_{b,h}$), find the discrete solution $u_h^n \in W_h$ restricted to the affine manifold $u_h^n - u_{b,h} \in V_h$ such that for all interior test functions $\phi_i \in V_h$:
$$ \left(\frac{\theta(u_h^n) - \theta(u_h^{n-1})}{\tau_n}, \phi_i\right)_h + \int_\Omega K(u_h^{n-1}) \nabla u_h^n \cdot \nabla \phi_i \, dx + \int_\Omega \overline{K}(u_h^{n-1}) \mathbf{e}_z \cdot \nabla \phi_i \, dx = 0. $$

The finite element solution at time $t_n$ is explicitly decomposed into the sum of unknown interior degrees of freedom and known discrete boundary values:
$$ u_h^n(x) = \sum_{j \in I} U_j^n \phi_j(x) + \sum_{j \in \Gamma} u_{b,h}(x_j) \phi_j(x). $$

Substituting this decomposition into the lagged-coefficient diffusion term and testing against an interior basis function $\phi_i$ ($i \in I$) yields:
$$
\begin{aligned}
\int_\Omega K(u_h^{n-1}) \nabla u_h^n \cdot \nabla \phi_i \, dx
&= \sum_{j \in I} U_j^n \int_\Omega K(u_h^{n-1}) \nabla \phi_j \cdot \nabla \phi_i \, dx \\
&\quad + \sum_{j \in \Gamma} u_{b,h}(x_j) \int_\Omega K(u_h^{n-1}) \nabla \phi_j \cdot \nabla \phi_i \, dx.
\end{aligned}
$$

Using the definition
$$
A_{ij}^{n-1} = \int_\Omega K(u_h^{n-1}) \nabla \phi_j \cdot \nabla \phi_i \, dx,
$$
this gives:
$$
\int_\Omega K(u_h^{n-1}) \nabla u_h^n \cdot \nabla \phi_i \, dx
= \sum_{j \in I} A_{ij}^{n-1} U_j^n + \sum_{j \in \Gamma} A_{ij}^{n-1} u_{b,h}(x_j).
$$

Substituting this expanded diffusion term back into the fully discrete variational equation, applying the lumped mass integration, and isolating the unknown interior terms on the left-hand side gives the algebraic nodal equation for all $i \in I$:
$$
\begin{aligned}
m_i \frac{\theta(U_i^n) - \theta(U_i^{n-1})}{\tau_n} + \sum_{j \in I} A_{ij}^{n-1} U_j^n
&= -\int_\Omega \overline{K}(u_h^{n-1}) \mathbf{e}_z \cdot \nabla \phi_i \, dx \\
&\quad - \sum_{j \in \Gamma} A_{ij}^{n-1} u_{b,h}(x_j).
\end{aligned}
$$

We define the right-hand side as the modified explicit gravity load vector $\widetilde{G}^{n-1} \in \mathbb{R}^{|I|}$, which incorporates both the explicit gravity method advection and the diffusion boundary contribution from the lagged-coefficient matrix:
$$ \widetilde{G}_i^{n-1} := G_i^{n-1} - \sum_{j \in \Gamma} A_{ij}^{n-1} u_{b,h}(x_j), $$
where $G_i^{n-1} = -\int_\Omega \overline{K}(u_h^{n-1}) \mathbf{e}_z \cdot \nabla \phi_i \, dx$. 

Because $i \in I$ is an interior node and $j \in \Gamma$ is a boundary node, we have $i \neq j$. By Lemma~\ref{lem:nonnegative-type}, the off-diagonal entries are non-positive: $A_{ij}^{n-1} \le 0$. Since the discrete boundary condition is strictly positive ($u_{b,h}(x_j) > 0$), the boundary flux contribution on the right-hand side acts as a strictly non-negative source term: $-A_{ij}^{n-1} u_{b,h}(x_j) \ge 0$.

\begin{assumption}[Strict algebraic time-step restriction]\label{ass:strict-timestep}
To guarantee strict positivity of the solution, the advective term in the explicit gravity method must be algebraically bounded. We assume the time step $\tau_n > 0$ satisfies the following strict inequality for all interior nodes $i \in I$ where $\widetilde{G}_i^{n-1} < 0$:
$$ \tau_n < \frac{m_i \theta(U_i^{n-1})}{|\widetilde{G}_i^{n-1}|}. $$
\textbf{Well-Definedness of $\tau_n > 0$:} By the induction hypothesis, $U_i^{n-1} > 0$. Since the function $\theta$ is strictly increasing and $\theta(0) = 0$, $\theta(U_i^{n-1}) > 0$. Because the lumped mass $m_i > 0$, the fraction defining the upper bound is strictly positive. As the mesh contains a finite number of interior nodes, taking the minimum of these bounds ensures a well-defined time step $\tau_n > 0$.
\end{assumption}

\begin{remark}[Vanishing time step at the degenerate limit]
If we had maintained homogeneous Dirichlet boundary conditions ($u_{b,h} = 0$), the nodes adjacent to the boundary would naturally satisfy $U_i^{n-1} \to 0$, implying $\theta(U_i^{n-1}) \to 0$ since $\theta(0) = 0$. If the local advective flux is strictly negative, this condition would force the allowable time step $\tau_n \to 0$, essentially halting the simulation. The strictly positive boundary condition ensures that $U_i^{n-1}$ and $\theta(U_i^{n-1})$ remain bounded away from zero, guaranteeing the existence of a positive time step $\tau_n > 0$.
\end{remark}

\begin{theorem}[Discrete strict positivity principle]\label{thm:strict-positivity}
Under the weakly acute mesh condition and the strict time-step restriction, if the initial interior discrete state is strictly positive ($U_i^0 > 0$ for all $i \in I$), then the discrete solution at any time step $n$ remains strictly positive: $U_i^n > 0$.
\end{theorem}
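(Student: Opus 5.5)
The argument is an induction on $n$, with the base case supplied by Assumption~\ref{ass:init-boundary}. Assume $U_i^{n-1}>0$ for all $i\in I$; then $\theta(U_i^{n-1})>0$, and the time step of Assumption~\ref{ass:strict-timestep} is well defined. We must show that any solution $U^n$ of the nodal equations
$$ m_i\frac{\theta(U_i^n)-\theta(U_i^{n-1})}{\tau_n}+\sum_{j\in I}A_{ij}^{n-1}U_j^n=\widetilde G_i^{n-1},\qquad i\in I, $$
has all $U_i^n>0$. We use a minimum-node argument instead of inverting a matrix, because the equation is nonlinear in $U^n$ through $\theta$.

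\textbf{Step 1 (row-sum bound at the minimizer).} Let $k\in I$ be an interior node with $U_k^n=\min_{j\in I}U_j^n$, and suppose for contradiction that $U_k^n\le 0$. Split the diffusion sum as
$$ \sum_{j\in I}A_{kj}^{n-1}U_j^n = U_k^n\sum_{j\in I}A_{kj}^{n-1} + \sum_{j\in I,\,j\neq k}A_{kj}^{n-1}(U_j^n-U_k^n). $$
Each off-diagonal term is $\le 0$, since $A_{kj}^{n-1}\le0$ by Lemma~\ref{lem:nonnegative-type} and $U_j^n-U_k^n\ge0$.

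For the first term, we need the sign of the interior row sum $\sum_{j\in I}A_{kj}^{n-1}$. The full row sum over $I\cup\Gamma$ equals $\int K\nabla(\sum_j\phi_j)\cdot\nabla\phi_k=0$, because $\sum_j\phi_j\equiv1$. Together with $A_{kj}^{n-1}\le0$ for $j\in\Gamma$, this gives $\sum_{j\in I}A_{kj}^{n-1}=-\sum_{j\in\Gamma}A_{kj}^{n-1}\ge0$. Since $U_k^n\le0$, the first term is also $\le0$. Hence $\sum_{j\in I}A_{kj}^{n-1}U_j^n\le 0$.

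\textbf{Step 2 (mass term and contradiction).} From the nodal equation and Step 1,
$$ m_k\theta(U_k^n)\ \ge\ m_k\theta(U_k^{n-1})+\tau_n\widetilde G_k^{n-1}. $$
If $\widetilde G_k^{n-1}\ge0$, the right-hand side is $\ge m_k\theta(U_k^{n-1})>0$. If $\widetilde G_k^{n-1}<0$, Assumption~\ref{ass:strict-timestep} gives $\tau_n|\widetilde G_k^{n-1}|<m_k\theta(U_k^{n-1})$, so the right-hand side is again strictly positive. In both cases $\theta(U_k^n)>0$. But $\theta$ is strictly increasing with $\theta(0)=0$, and $\theta(u)=u$ for $u<0$, so $U_k^n\le0$ forces $\theta(U_k^n)\le0$. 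This is a contradiction, so $U_k^n>0$ and hence $U_i^n>0$ for all $i\in I$, closing the induction.

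\textbf{Main obstacle.} The delicate point is the sign bookkeeping in Step 1 when the minimizing node has no boundary neighbours; there the interior row sum is exactly zero. The argument above still works because we only need the inequality $\le0$, not a strict one. The strictness comes entirely from the mass term in Step 2, so no strict M-matrix property is needed.

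The statement concerns the solution the scheme produces, so existence of $U^n$ is taken as given. If existence needs justification, a monotone-operator or Brouwer argument could supply it, but that lies outside this theorem.
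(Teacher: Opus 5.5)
Your proof is correct, but it takes a different route from the paper's.

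The paper argues by Stampacchia truncation:
\begin{itemize}
\item It tests the nodal system against $v_i=\min(0,U_i^n)$.
\item It shows the diffusion contribution $\sum_{i,j\in I}A_{ij}^{n-1}U_j^n v_i$ is nonnegative, using the splitting $U^n=(U^n)^++v$ and the identity $\sum_{i,j}A_{ij}v_iv_j=\sum_i\bigl(\sum_j A_{ij}\bigr)v_i^2-\tfrac12\sum_{i\neq j}A_{ij}(v_i-v_j)^2$.
\item From the termwise sign of $m_i\theta(U_i^n)v_i$ it gets $v=0$, i.e.\ $U^n\ge0$.
\item Only then does it prove strictness, by a separate nodal contradiction at a node with $U_i^n=0$. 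That step relies on the already established $U_j^n\ge0$ to sign the off-diagonal sum.
\end{itemize}

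Your minimum-node argument is essentially that last step, applied at the minimizer instead. Minimality ($U_j^n-U_k^n\ge0$) replaces the a priori nonnegativity of the neighbours, and the nonnegative interior row sum absorbs the factor $U_k^n\le0$. This yields strict positivity in a single step and removes the need for the truncation and the quadratic-form identity.

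A further benefit is that you use only the row sign pattern: nonpositive off-diagonal entries and nonnegative interior row sums. The paper's identity tacitly needs the symmetry of $A^{n-1}$ (otherwise row and column sums differ); yours does not. Your argument would therefore carry over unchanged to nonsymmetric matrices with the same sign structure, for instance an upwinded implicit advection term.

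The paper's energy route, in turn, mirrors the continuous Stampacchia method. It is the one to adapt if one only knew that the diffusion form is nonnegative when tested against truncations, rather than having a nodewise sign pattern.

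Both proofs rest on the same ingredients: mass lumping, the weakly acute sign structure, the partition-of-unity row-sum computation, and the strict time-step bound. Both also take existence of $U^n$ for granted.
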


\begin{proof}
We proceed by mathematical induction. Assume that $U_i^{n-1} > 0$ for all $i \in I$. We employ a discrete variational energy method based on Stampacchia truncation \cite{kinderlehrer1980}. Define the negative part of the interior discrete solution as the vector $v \in \mathbb{R}^{|I|}$, with components given by $v_i = \min(0, U_i^n)$. By definition, $v_i \le 0$, and we can decompose the solution as $U_j^n = (U_j^n)^+ + v_j$, where $(U_j^n)^+ = \max(0, U_j^n) \ge 0$.

Taking the Euclidean inner product of the algebraic nodal balance equation with $v$ yields:
$$ \sum_{i \in I} m_i (\theta(U_i^n) - \theta(U_i^{n-1})) v_i + \tau_n \sum_{i \in I} \sum_{j \in I} A_{ij}^{n-1} U_j^n v_i = \tau_n \sum_{i \in I} \widetilde{G}_i^{n-1} v_i. $$

\textbf{1. The diffusion term:} Using the decomposition $U_j^n = (U_j^n)^+ + v_j$ and noting that the disjoint supports imply $(U_i^n)^+ v_i = 0$, we expand the bilinear form strictly over the interior nodes:
$$ \sum_{i \in I} \sum_{j \in I} A_{ij}^{n-1} U_j^n v_i = \sum_{i \in I} \sum_{\substack{j \in I \\ j \neq i}} A_{ij}^{n-1} (U_j^n)^+ v_i + \sum_{i \in I} \sum_{j \in I} A_{ij}^{n-1} v_j v_i. $$
By Lemma~\ref{lem:nonnegative-type}, $A_{ij}^{n-1} \le 0$ for $i \neq j$. Since $(U_j^n)^+ \ge 0$ and $v_i \le 0$, the first term is non-negative. For the second term, we algebraically expand the quadratic form:
$$ \sum_{i \in I} \sum_{j \in I} A_{ij}^{n-1} v_j v_i = \sum_{i \in I} \left( \sum_{j \in I} A_{ij}^{n-1} \right) v_i^2 - \frac{1}{2} \sum_{i \in I} \sum_{\substack{j \in I \\ j \neq i}} A_{ij}^{n-1} (v_i - v_j)^2. $$
Using the partition of unity ($\sum_{j \in I \cup \Gamma} \phi_j \equiv 1 \implies \sum_{j \in I \cup \Gamma} \nabla \phi_j = \mathbf{0}$), the row sum of the interior lagged-coefficient stiffness matrix equates to:
$$ \sum_{j \in I} A_{ij}^{n-1} = \int_\Omega K(u_h^{n-1}) \nabla \phi_i \cdot \left( \sum_{j \in I \cup \Gamma} \nabla \phi_j \right) dx - \sum_{j \in \Gamma} A_{ij}^{n-1} = - \sum_{j \in \Gamma} A_{ij}^{n-1}. $$
Because $j \in \Gamma$ and $i \in I$ implies $i \neq j$, Lemma~\ref{lem:nonnegative-type} guarantees $A_{ij}^{n-1} \le 0$. Therefore, $\sum_{j \in I} A_{ij}^{n-1} \ge 0$. Consequently, both components in the quadratic expansion are non-negative, yielding:
$$ \sum_{i \in I} \sum_{j \in I} A_{ij}^{n-1} U_j^n v_i \ge 0. $$

\textbf{2. The evolution and advection terms:} Substituting the diffusion bound back into the balance equation and rearranging to group the terms evaluated at $t_{n-1}$ yields:
$$ \sum_{i \in I} m_i \theta(U_i^n) v_i \le \sum_{i \in I} \left( m_i \theta(U_i^{n-1}) + \tau_n \widetilde{G}_i^{n-1} \right) v_i. $$

\textbf{3. Applying the stability condition:} By the strict time-step restriction, $m_i \theta(U_i^{n-1}) + \tau_n \widetilde{G}_i^{n-1} > 0$. Because $v_i \le 0$, the product of these terms is non-positive. Summing this over all interior nodes implies the right-hand side is $\le 0$, leading to:
$$ \sum_{i \in I} m_i \theta(U_i^n) v_i \le 0. $$

\textbf{4. Non-negativity:} If $v_i < 0$, it implies $U_i^n < 0$. Because $\theta$ is strictly increasing with $\theta(0)=0$, this implies $\theta(U_i^n) < 0$. Since $m_i > 0$, the product is strictly positive: $m_i \theta(U_i^n) v_i > 0$. Therefore, for the sum of non-negative terms to be $\le 0$, every term must be exactly zero. This requires $v_i = 0$ for all nodes, concluding that $U_i^n \ge 0$.

\textbf{5. Strict positivity:} Assume by contradiction that there exists a node $i \in I$ such that $U_i^n = 0$. Since we established $U_j^n \ge 0$ for all $j \in I$, and knowing $\theta(0)=0$, the algebraic nodal equation evaluated at node $i$ simplifies to:
$$ m_i \frac{0 - \theta(U_i^{n-1})}{\tau_n} + A_{ii}^{n-1}(0) + \sum_{\substack{j \in I \\ j \neq i}} A_{ij}^{n-1} U_j^n = \widetilde{G}_i^{n-1}. $$
Rearranging this equation gives:
$$ \frac{m_i \theta(U_i^{n-1})}{\tau_n} + \widetilde{G}_i^{n-1} = \sum_{\substack{j \in I \\ j \neq i}} A_{ij}^{n-1} U_j^n. $$
By Lemma~\ref{lem:nonnegative-type}, $A_{ij}^{n-1} \le 0$ for $i\neq j$. Since $U_j^n \ge 0$, the right-hand side is non-positive ($\le 0$). However, applying the strict inequality from assumption~\ref{ass:strict-timestep} ensures that the left-hand side is strictly positive ($> 0$). This yields a contradiction. Therefore, $U_i^n$ cannot be zero, which proves that $U_i^n > 0$.
\end{proof}

\begin{remark}[Unconditional positivity in favorable flow]
The time-step restriction in assumption~\ref{ass:strict-timestep} is only required for nodes where the explicit gravity load is strictly negative ($\widetilde{G}_i^{n-1} < 0$). This yields two observations:
\begin{enumerate}
    \item \textbf{Absence of gravity:} Without gravity, the advective term vanishes, and the scheme naturally respects the discrete positivity principle unconditionally for any $\tau_n > 0$.
    \item \textbf{Positivity of the gravity term:} If the local explicit gravity source term is non-negative ($G_i^{n-1} \ge 0$), positivity is guaranteed without any time-step restriction.
\end{enumerate}
However, in general flow scenarios where $G_i^{n-1} < 0$, the explicit gravity method imposes a severe time-step restriction, especially near the degenerate limit where $\theta(U_i^{n-1}) \to 0$. This motivates the linearly implicit advection scheme in the following section.
\end{remark}

\subsection{Linearly implicit advection scheme and minimum principle}\label{sec:implicit}
While the explicit gravity method preserves strict positivity under assumption~\ref{ass:strict-timestep}, this algebraic condition becomes overly restrictive near the degenerate limit. To circumvent the explicit time-step restriction when advection is active, we introduce a linearly implicit advection scheme. We revert to homogeneous Dirichlet boundary conditions ($u|_{\partial \Omega} = 0$, meaning $u_h^n \in V_h$ and $u_{b,h} = 0$).
In this section, we assume uniform time stepping, namely $\tau_n = \tau > 0$ for all $n$.

Let the advective flux be $\overline{K}(u) \mathbf{e}_z$. We define the ratio function at time $t_{n-1}$:
$$ \beta_h^{n-1}(x) := \frac{\overline{K}(u_h^{n-1}(x))}{u_h^{n-1}(x)}. $$

\begin{remark}[Degenerate limit]
As $u \to 0$, the limit $\lim_{u \to 0} \frac{\overline{K}(u)}{u}$ is finite and well-defined. This prevents division by zero at the degenerate limit.
\end{remark}

To rigorously justify that the limit $\lim_{u \to 0} \frac{\overline{K}(u)}{u}$ is finite and well-defined across different constitutive relationships, we analyze the asymptotic behavior of the auxiliary variable $u$ and the relative permeability $K_{r}(S)$ near the degenerate dry limit ($S \to 0$).

For the widely used empirical models \cite{benfanich2025}, the auxiliary variable $u(S)$ is defined such that $u \sim_{S\to 0} S$. Consequently, evaluating the limit of the advective ratio $\beta(u) = \frac{\overline{K}(u)}{u}$ is asymptotically equivalent to evaluating $K_s \frac{K_{r}(S)}{S}$.

\textbf{1. The Gardner Model \cite{gardner1958}:}
For the Gardner model, the relative permeability is linear with respect to saturation, $K_{r}(S) = S$, and the auxiliary variable behaves exactly as $u(S) = S$. Therefore, the ratio evaluates to $\frac{\overline{K}(u)}{u} = K_s$, which is a strictly bounded, finite constant.

\textbf{2. The Brooks-Corey Model \cite{brookscorey1966}:}
For the Brooks-Corey model for the capillary pressure and the relative permeability governed by a power law $K_{r}(S) = S^B$, where $B > 0$ is an empirical pore-size parameter. Because $u \sim_{S\to 0} S$ near the dry limit, the advective ratio behaves as $K_s S^{B-1}$. As established in \cite{benfanich2025}, as long as $B > 1$, the limit strictly goes to $0$.

\textbf{3. The Haverkamp Model \cite{haverkamp1977}:}
The Haverkamp model defines saturation and relative permeability via the capillary pressure $\Psi$ as $S(\Psi) = \frac{1}{1+|\alpha\Psi|^\beta}$ and $K_{r}(\Psi) = \frac{1}{1+|A\Psi|^\gamma}$. As $S \to 0$, the capillary pressure $|\Psi| \to \infty$. From the saturation equation, we can asymptotically approximate $|\Psi| \sim_{S\to 0} \frac{1}{\alpha} S^{-1/\beta}$. Substituting this into the relative permeability equation yields:
$$ K_{r}(S) \sim_{S\to 0} \frac{1}{|A\Psi|^\gamma} \sim_{S\to 0} \frac{1}{A^\gamma \alpha^{-\gamma} S^{-\gamma/\beta}} = \left(\frac{\alpha}{A}\right)^\gamma S^{\gamma/\beta}. $$
Consequently, the advective ratio near the dry limit evaluates to:
$$ \lim_{u \to 0} \frac{\overline{K}(u)}{u} = \lim_{S \to 0} K_s \left(\frac{\alpha}{A}\right)^\gamma S^{\gamma/\beta - 1}. $$
Provided that the empirical parameters satisfy $\gamma > \beta$ (which implies $\gamma/\beta - 1 > 0$), this limit goes exactly to $0$ \cite{benfanich2025}. 

\textbf{4. The van Genuchten-Mualem Model \cite{vangenuchten1980, mualem1976}:}
For the van Genuchten-Mualem model, the relative permeability is given by $K_{r}(S) = S^{1/2} [1 - (1 - S^{1/m})^m]^2$. Near the degenerate limit ($S \to 0$), we can apply a Taylor expansion to the inner term: $(1 - S^{1/m})^m \sim_{S\to 0} 1 - m S^{1/m}$. 
Substituting this back into the relative permeability yields:
$$ K_{r}(S) \sim_{S\to 0} S^{1/2} [m S^{1/m}]^2 = m^2 S^{1/2 + 2/m}. $$
Because $u \sim_{S\to 0} S$ near the dry limit, the advective ratio behaves as:
$$ \lim_{u \to 0} \frac{\overline{K}(u)}{u} = \lim_{S \to 0} \frac{K_s m^2 S^{1/2 + 2/m}}{S} = \lim_{S \to 0} K_s m^2 S^{2/m - 1/2}. $$
Since $n > 1$, we have $m = 1 - 1/n \in (0,1)$, which guarantees that $2/m > 2$. Therefore, the exponent $(2/m - 1/2)$ is strictly greater than $1.5$. Thus, the limit safely goes to $0$ \cite{benfanich2025}.

Table \ref{tab:limits} summarizes these asymptotic limits, rigorously proving that the linearly implicit advection scheme inherently avoids division by zero at the degenerate limit for all standard soil models, thereby guaranteeing the well-posedness of the local P\'eclet condition.

\begin{table}[htbp]
\centering
\caption{Asymptotic behavior and limits of the advective ratio $\beta(u) = \overline{K}(u)/u$ near the degenerate limit ($S \to 0$) for standard soil models.}
\label{tab:limits}
\resizebox{\linewidth}{!}{
\begin{tabular}{lccc}
\toprule
\textbf{Soil Model} & $K_{r}(S)$ near $S \to 0$ & $\frac{\overline{K}(u)}{u}$ & $\lim_{u \to 0} \frac{\overline{K}(u)}{u}$ \\
\midrule
\textbf{Gardner \cite{gardner1958}} & $S$ & $K_s$ & $K_s$ (Bounded) \\
\textbf{Brooks-Corey \cite{brookscorey1966}} & $S^B$ & $K_s S^{B-1}$ & $0$ (for $B > 1$) \\
\textbf{Haverkamp \cite{haverkamp1977}} & $\left(\frac{\alpha}{A}\right)^\gamma S^{\gamma/\beta}$ & $K_s \left(\frac{\alpha}{A}\right)^\gamma S^{\gamma/\beta - 1}$ & $0$ (for $\gamma > \beta$) \\
\textbf{van Genuchten-Mualem \cite{vangenuchten1980, mualem1976}} & $m^2 S^{1/2 + 2/m}$ & $K_s m^2 S^{2/m - 1/2}$ & $0$ (for $m \in (0,1)$) \\
\bottomrule
\end{tabular}
}
\end{table}

We present the linearly implicit advection scheme. For a given time step $t_n$, given $u_h^{n-1} \in V_h$, find $u_h^n \in V_h$ such that for all test functions $v_h \in V_h$:
\begin{equation} \label{eq:scheme}
\left(\frac{\theta(u_h^n)-\theta(u_h^{n-1})}{\tau}, v_h\right)_h 
+ \int_\Omega K(u_h^{n-1}) \nabla u_h^n \cdot \nabla v_h 
+ \int_\Omega \beta_h^{n-1} u_h^n \mathbf{e}_z \cdot \nabla v_h = 0.
\end{equation}

\begin{assumption}[Local Péclet spatial condition]\label{ass:local-peclet}
Let $\rho_h^{n-1}(x) := \frac{\beta_h^{n-1}(x)}{K(u_h^{n-1}(x))}$. To guarantee nodal positivity, we assume that on every simplex $T$ and for every local node $x_i$ belonging to $T$:
\begin{equation} \label{eq:local-peclet}
\sup_{x\in T} \rho_h^{n-1}(x) (\mathbf{e}_z \cdot \nabla \phi_i)^+ \le \min_{\substack{j \neq i \\ x_j \in T}} \bigl(-\nabla \phi_i \cdot \nabla \phi_j\bigr).
\end{equation}
This condition is typically verified when the local mesh size $h_T$ is sufficiently small.
\end{assumption}

\begin{remark}[Practical positivity test]
Condition (\ref{eq:local-peclet}) is a geometric mesh-Péclet restriction, not a time-step ($\Delta t$) restriction. Because $|\nabla\phi_i| \lesssim h_T^{-1}$ and $-\nabla\phi_i \cdot \nabla\phi_j \gtrsim h_T^{-2}$ on a shape-regular acute mesh, condition (\ref{eq:local-peclet}) is naturally satisfied if:
$$ Pe_T := h_T \sup_{x\in T} \rho_h^{n-1}(x) \lesssim 1. $$

For the van Genuchten-Mualem model, the ratio is
$$
\rho_h^{n-1}(x)=\alpha (n-1) \theta(u)^{1/m},
$$
where $\alpha$ and $n$ are the van Genuchten parameters, $m = 1 - 1/n$ and $n>1$. Because $\theta(u) \to 0$ as $u \to 0$, the local Péclet condition is satisfied near the degenerate limit, regardless of the mesh size. This is a critical observation that allows us to guarantee positivity near the degeneracy.
\end{remark}

\begin{theorem}[Discrete minimum principle]\label{thm:minimum}
Under assumption~\ref{ass:weakly-acute} (Weakly Acute Mesh) and assumption~\ref{ass:local-peclet} (Local Péclet Condition), if the discrete state at the previous time step is non-negative ($U^{n-1} \ge 0$), then the linearly implicit advection scheme strictly preserves this lower bound: $U^n \ge 0$.
\end{theorem}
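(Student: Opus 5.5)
We follow the Stampacchia truncation argument used for Theorem~\ref{thm:strict-positivity}, now with the advective term moved to the implicit side. Writing $u_h^n=\sum_{j\in I}U_j^n\phi_j$ (homogeneous Dirichlet data), the scheme (\ref{eq:scheme}) tested with $\phi_i$ gives the nodal system
$$ m_i\frac{\theta(U_i^n)-\theta(U_i^{n-1})}{\tau}+\sum_{j\in I}\bigl(A_{ij}^{n-1}+B_{ij}^{n-1}\bigr)U_j^n=0,\qquad B_{ij}^{n-1}:=\int_\Omega \beta_h^{n-1}\phi_j\,\mathbf{e}_z\cdot\nabla\phi_i\,dx. $$
We set $v_i=\min(0,U_i^n)$ and $v_h=\sum_i v_i\phi_i\in V_h$. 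Testing with $v_h$ (equivalently, taking the inner product of the nodal system with $v$) and using $U^{n-1}\ge0$, $v\le0$, gives
$$ -\sum_i m_i\theta(U_i^{n-1})v_i\ge0. $$
Thus the time-derivative contribution satisfies $\sum_i m_i\theta(U_i^n)v_i\le \sum_i m_i\theta(U_i^n)v_i-\sum_i m_i\theta(U_i^{n-1})v_i$. We will show that the spatial part $\sum_{i,j}(A_{ij}+B_{ij})U_j^nv_i$ is $\ge0$. It then follows that $\sum_i m_i\theta(U_i^n)v_i\le0$. Each term with $v_i<0$ is strictly positive, since then $\theta(U_i^n)<0$ by the extension $\theta(u)=u$ for $u<0$. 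Hence $v\equiv0$ and $U^n\ge0$, exactly as in step 4 of the previous proof.

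The core step is to show that the combined matrix $M:=A^{n-1}+B^{n-1}$ has the structure needed for the truncation argument. We work element by element. On each simplex $T$ the gradients $\nabla\phi_i$ are constant and $\beta_h^{n-1}=\rho_h^{n-1}K(u_h^{n-1})\ge0$. For $j\ne i$, with both nodes in $T$,
$$ M_{ij}^T=\int_T K\,\nabla\phi_i\cdot\nabla\phi_j+\int_T \rho K\phi_j\,\mathbf{e}_z\cdot\nabla\phi_i \le \int_T K\phi_j\Bigl(\tfrac{\nabla\phi_i\cdot\nabla\phi_j}{\int_T K\phi_j/\int_T K}\Bigr)+\sup_T\rho\,(\mathbf{e}_z\cdot\nabla\phi_i)^+\int_T K\phi_j. $$
This is the delicate point. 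The diffusion term carries the weight $\int_TK$ while the advection term carries $\int_TK\phi_j$, which is smaller, since $0\le\phi_j\le1$. So assumption~\ref{ass:local-peclet} makes $M_{ij}^T\le0$ provided we can compare $\int_TK\phi_j$ with $\int_TK$, which is immediate because $\int_TK\phi_j\le\int_TK$. Concretely,
$$ M_{ij}^T\le \int_TK\,\nabla\phi_i\cdot\nabla\phi_j+\sup_T\rho\,(\mathbf{e}_z\cdot\nabla\phi_i)^+\int_TK\le\int_TK\bigl(\nabla\phi_i\cdot\nabla\phi_j+\min_{k\ne i}(-\nabla\phi_i\cdot\nabla\phi_k)\bigr)\le0. $$
Summing over elements then gives $M_{ij}\le0$ for all $i\ne j$. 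The other term in the split, $\int_T\rho K\phi_j(\mathbf{e}_z\cdot\nabla\phi_i)^-$, is nonpositive and only helps.

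With $M_{ij}\le0$ in hand, we split $U^n=(U^n)^++v$. The cross terms $\sum_{i\ne j}M_{ij}(U_j^n)^+v_i$ are $\ge0$, and the diagonal cross terms vanish by disjoint supports. What remains is the quadratic form $\sum_{i,j}M_{ij}v_jv_i=a(v_h,v_h)+\int_\Omega\beta_h^{n-1}v_h\,\mathbf{e}_z\cdot\nabla v_h$, and we must show it is $\ge0$. This is the main obstacle, because $M$ is not symmetric and its row sums need not be nonnegative. We will not use a row-sum argument, since the row-sum condition is reserved for the maximum principle. Instead we use the $L$-matrix structure. Write $M=D-N$ with $N\ge0$ off the diagonal. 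A column-sum argument works instead. Because $\sum_{i}\nabla\phi_i=0$ on interior elements, the column sums of $B$ are $\int\beta\phi_j\,\mathbf{e}_z\cdot\sum_i\nabla\phi_i$, which vanish except through boundary terms of a definite sign. The column sums of $A$ are $\ge0$ as in the proof of Theorem~\ref{thm:strict-positivity}. Therefore $M^T$ has nonpositive off-diagonal entries and nonnegative row sums, and is of nonnegative type. By Gershgorin on $M^T$, the symmetric part $(M+M^T)/2$ is then diagonally dominant, with nonnegative diagonal and nonpositive off-diagonal entries. This yields $v^TMv\ge0$. If the boundary column terms have the wrong sign, the fallback is to avoid the quadratic form altogether and argue directly on the node $i$ where $U_i^n$ attains its minimum. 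That route requires a row-based version of the same local inequality.
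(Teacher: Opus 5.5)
There is a genuine gap. Your off-diagonal estimate is correct and coincides with Step~2 of the paper's proof. The step that carries your argument, however, is $v^{T}Mv\ge 0$, and it is not justified and fails in general.

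A matrix with nonpositive off-diagonal entries and nonnegative column sums need not have a diagonally dominant symmetric part. The $i$-th row sum of $\tfrac12(M+M^{T})$ is $\tfrac12\bigl(\sum_j M_{ij}+\sum_j M_{ji}\bigr)$, and the row sums of $M$ are exactly what is not controlled here. Up to nonnegative boundary contributions they equal $\int_\Omega \beta_h^{n-1}\mathbf{e}_z\cdot\nabla\phi_i\,dx$, which is negative wherever $\beta_h^{n-1}$ increases in $z$; that is the role of Assumption~\ref{ass:dmp-rowsum}. Gershgorin applied to $M^{T}$ locates the spectrum of $M$, not its numerical range. For example, $M=\begin{pmatrix}10&0\\-10&1\end{pmatrix}$ has column sums $0$ and $1$, yet $v=(-1,-5)^{T}$ gives $v^{T}Mv=-15$.

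In the finite element setting, $v^{T}Mv=\int_\Omega K(u_h^{n-1})|\nabla v_h|^2\,dx-\tfrac12\int_\Omega \partial_z\beta_h^{n-1}\,v_h^2\,dx$. The local P\'eclet condition does not prevent the second term from dominating for a broad $v_h$ supported where $\beta_h^{n-1}$ grows upward. Because $\theta(U)=U$ for $U<0$, your truncation identity would then only close under something like positive definiteness of $\operatorname{diag}(m_i)+\tfrac{\tau}{2}(M+M^{T})$. That is a time-step restriction the theorem does not have. Your fallback at the minimizing node fails for the same reason: it needs $\sum_j M_{ij}\ge 0$, i.e.\ Assumption~\ref{ass:dmp-rowsum}, which is not a hypothesis here.

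The missing idea is that column sums give inverse positivity, a linear statement, rather than coercivity. The paper uses the mean value theorem to write the step as $(D^n+\tau S^{n-1})U^n=D^nU^{n-1}$ with $D^n\ge0$ diagonal. It observes that this Z-matrix is (irreducibly) diagonally dominant by columns, hence a nonsingular M-matrix, since a matrix is a nonsingular M-matrix iff its transpose is. It then concludes $U^n=(D^n+\tau S^{n-1})^{-1}D^nU^{n-1}\ge 0$.

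You can stay in your own framework by testing with the indicator of $N=\{i\in I: U_i^n<0\}$ instead of with $v$. Summing the nodal equations over $i\in N$, the time term is strictly negative since $\theta(U_i^n)<0\le\theta(U_i^{n-1})$. Meanwhile $\sum_{j\in I}U_j^n\sum_{i\in N}M_{ij}\le 0$, because $\sum_{i\in N}M_{ij}=\sum_{i\in I}M_{ij}-\sum_{i\in I\setminus N}M_{ij}\ge 0$ for $j\in N$, and $\sum_{i\in N}M_{ij}\le 0$ for $j\notin N$. This contradiction needs no restriction on $\tau$.

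Finally, the column sums must be computed for the combined matrix, as the paper does in Step~3: $\sum_{i\in I}M_{ij}=-\sum_{i\in\Gamma}(S^{n-1}_{\mathrm{full}})_{ij}\ge 0$. This uses Assumption~\ref{ass:local-peclet} at boundary vertices to get $(S^{n-1}_{\mathrm{full}})_{ij}\le 0$ for $i\in\Gamma$. Treating $A^{n-1}$ and your $B^{n-1}$ separately does not work, because $\sum_{i\in I}B^{n-1}_{ij}=-\int_\Omega\beta_h^{n-1}\phi_j\,\mathbf{e}_z\cdot\nabla\bigl(\sum_{i\in\Gamma}\phi_i\bigr)dx$ has no definite sign.
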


\begin{proof}
\textbf{Step 1: Matrix form.} Let $\{\phi_i\}_{i \in I}$ be the interior nodal basis and write $u_h^n = \sum_{i \in I} U_i^n \phi_i$.
Since $\theta$ is assumed to be strictly increasing and sufficiently smooth, the mean value theorem gives, for each interior node $i \in I$, an intermediate value $\xi_i^n$ between $U_i^n$ and $U_i^{n-1}$ such that
$$ \theta(U_i^n) - \theta(U_i^{n-1}) = \theta'(\xi_i^n)\,(U_i^n - U_i^{n-1}) = d_i^n\,(U_i^n - U_i^{n-1}), $$
where $d_i^n := \theta'(\xi_i^n) \geq 0$. We define the diagonal matrix $D^n = \operatorname{diag}(d_i^n)$. The fully discrete scheme reads:
\begin{equation} \label{eq:matrix-form}
(D^n + \tau S^{n-1})U^n = D^n U^{n-1},
\end{equation}
where $D^n = \operatorname{diag}(d_i^n)$ with $d_i^n \ge 0$, and the system matrix is $S^{n-1} = A^{n-1} + C^{n-1}$. The linearly implicit (lagged-coefficient) advective matrix is $C_{ij}^{n-1} = \int_\Omega \beta_h^{n-1} \phi_j \mathbf{e}_z \cdot \nabla \phi_i \, dx$.

\textbf{Step 2: Non-positive off-diagonal entries.} Fix a simplex $T$. For any two distinct local nodes $i \neq j$, the local matrix entry is:
$$ S_{ij}^T = \int_T K(u_h^{n-1}) \nabla \phi_j \cdot \nabla \phi_i + \int_T \beta_h^{n-1} \phi_j \mathbf{e}_z \cdot \nabla \phi_i. $$
Defining $\mu_{ij}^T := -\nabla \phi_i \cdot \nabla \phi_j \ge 0$, and observing that $0 \le \phi_j \le 1$, we bound the local entry:
$$ S_{ij}^T \le \Bigl[ -\mu_{ij}^T + \Bigl(\sup_{x\in T} \rho_h^{n-1}(x)\Bigr) (\mathbf{e}_z \cdot \nabla \phi_i)^+ \Bigr] \int_T K(u_h^{n-1}). $$
Under assumption~\ref{ass:local-peclet}, the bracketed term is less than or equal to zero. Hence, after global assembly, all off-diagonal entries are non-positive.

\textbf{Step 3: Column sums.} Let $S_{\mathrm{full}}^{n-1}$ be the extended matrix evaluated on all nodes $I \cup \Gamma$. Using the partition of unity ($\sum_{i \in I \cup \Gamma} \phi_i \equiv 1 \implies \sum_{i \in I \cup \Gamma} \nabla \phi_i = \mathbf{0}$), the column sum for any interior node $j \in I$ is computed as follows:
$$
\begin{aligned}
\sum_{i \in I \cup \Gamma} (S_{\mathrm{full}}^{n-1})_{ij}
&= \int_\Omega K(u_h^{n-1}) \nabla \phi_j \cdot \left( \sum_{i \in I \cup \Gamma} \nabla \phi_i \right) dx \\
&\quad + \int_\Omega \beta_h^{n-1} \phi_j \mathbf{e}_z \cdot \left( \sum_{i \in I \cup \Gamma} \nabla \phi_i \right) dx \\
&= 0.
\end{aligned}
$$
Restricting this evaluation to the interior block $S^{n-1}$ for $i, j \in I$ gives:
$$ \sum_{i \in I} S_{ij}^{n-1} = \sum_{i \in I \cup \Gamma} (S_{\mathrm{full}}^{n-1})_{ij} - \sum_{i \in \Gamma} (S_{\mathrm{full}}^{n-1})_{ij} = - \sum_{i \in \Gamma} (S_{\mathrm{full}}^{n-1})_{ij}. $$
Since $i \in \Gamma$ and $j \in I$ implies $i \neq j$, Step 2 guarantees $(S_{\mathrm{full}}^{n-1})_{ij} \le 0$. Thus:
$$ \sum_{i \in I} S_{ij}^{n-1} \ge 0. $$

\textbf{Step 4: M-matrix inverse.} The matrix $B^n = D^n + \tau S^{n-1}$ has non-positive off-diagonal entries and non-negative column sums. Because the interior mesh forms a connected graph terminating at the Dirichlet boundary, $B^n$ is an irreducibly diagonally dominant M-matrix \cite{varga2000}. By the fundamental characterizations of non-singular M-matrices \cite{berman1994}, its inverse is strictly non-negative: $(B^n)^{-1} \ge 0$. Multiplying the non-negative right-hand side $D^n U^{n-1}$ by $(B^n)^{-1}$ guarantees $U^n \ge 0$.
\end{proof}

\subsection{Discrete maximum principle}\label{sec:dmp}
While Section~\ref{sec:implicit} establishes the minimum principle (lower bound) of the solution, this section proves the complementary discrete maximum principle, establishing the global upper bound. Let $\{\phi_j\}_{j\in I\cup\Gamma}$ denote the full nodal basis spanning interior ($I$) and boundary ($\Gamma$) nodes. The full assembled matrix is:
$$ (S_{\mathrm{full}}^{n-1})_{ij} := \int_\Omega K(u_h^{n-1}) \nabla \phi_j \cdot \nabla \phi_i \, dx + \int_\Omega \beta_h^{n-1}\phi_j\,\mathbf g\cdot \nabla \phi_i \, dx. $$
The interior nodal equations can be written as:
\begin{equation}\label{eq:nodal-dmp-full}
m_i \frac{\theta(U_i^n)-\theta(U_i^{n-1})}{\tau} + \sum_{j\in I\cup\Gamma}(S_{\mathrm{full}}^{n-1})_{ij} U_j^n = 0, \qquad i\in I,
\end{equation}
with homogeneous Dirichlet boundary values $U_j^n = 0, \ j\in \Gamma$.

\begin{assumption}[Discrete row-sum condition \cite{barrenechea2024}, Def. 3.2]\label{ass:dmp-rowsum}
Assume that for every interior node $i\in I$, we have:
\begin{equation}\label{eq:dmp-rowsum}
\sum_{j\in I\cup\Gamma}(S_{\mathrm{full}}^{n-1})_{ij} \ge 0.
\end{equation}
Since the basis functions satisfy $\sum_{j\in I\cup\Gamma}\phi_j \equiv 1$ the equation above is equivalent to:
$$ \sum_{j\in I\cup\Gamma}(S_{\mathrm{full}}^{n-1})_{ij} = \int_\Omega \beta_h^{n-1}\,\mathbf g\cdot \nabla \phi_i \, dx \ge 0. $$
\end{assumption}

\begin{theorem}[Discrete maximum principle]\label{thm:maximum}
Assume that $\theta:\mathbb R\to\mathbb R$ is strictly increasing, that $K(u)\ge 0$ and $\overline K(u)\ge 0$, that $U_j^{n-1} \ge 0$ for all $j\in I$, and that assumptions~\ref{ass:weakly-acute}, \ref{ass:local-peclet}, and \ref{ass:dmp-rowsum} are satisfied. Then the fully discrete scheme satisfies:
$$ U_i^n \le \max_{j\in I} U_j^{n-1}, \qquad \forall i\in I. $$
\end{theorem}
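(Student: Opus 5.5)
The plan is a direct nodal argument at the node where the new solution attains its largest interior value. It combines the sign structure of $S_{\mathrm{full}}^{n-1}$ from the proof of Theorem~\ref{thm:minimum} with the row-sum condition of Assumption~\ref{ass:dmp-rowsum}.

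Set $M^{n-1} := \max_{j\in I} U_j^{n-1}$, which is non-negative since $U^{n-1}\ge 0$. Choose $k\in I$ with $U_k^n = \max_{j\in I} U_j^n$, and suppose for contradiction that $U_k^n > M^{n-1}$. In particular $U_k^n > 0$.

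First I record two sign facts.
\begin{itemize}
\item Theorem~\ref{thm:minimum} applies under Assumptions~\ref{ass:weakly-acute} and~\ref{ass:local-peclet} with $U^{n-1}\ge 0$, so $U_j^n\ge 0$ for all $j\in I$. Together with the homogeneous Dirichlet values this gives $U_j^n \le U_k^n$ for every $j\in I\cup\Gamma$: interior nodes by maximality of $k$, and boundary nodes because $U_j^n=0< U_k^n$.
\item Step~2 of the proof of Theorem~\ref{thm:minimum} is purely local on each simplex $T$ and never uses that $j$ is interior. Hence $(S_{\mathrm{full}}^{n-1})_{kj}\le 0$ for all $j\ne k$, including $j\in\Gamma$. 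This is the point I expect to need the most care: the earlier proof stated non-positivity only after assembly into the interior block, so I will re-check that the local Péclet bound~\eqref{eq:local-peclet} covers boundary neighbours of $k$. It does, since it is imposed for every pair of local nodes of every simplex.
\end{itemize}

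Next I rewrite the flux term in~\eqref{eq:nodal-dmp-full} at $i=k$ as
\[
\sum_{j\in I\cup\Gamma}(S_{\mathrm{full}}^{n-1})_{kj}U_j^n = \Bigl(\sum_{j\in I\cup\Gamma}(S_{\mathrm{full}}^{n-1})_{kj}\Bigr)U_k^n + \sum_{j\ne k}(S_{\mathrm{full}}^{n-1})_{kj}\,(U_j^n-U_k^n).
\]
The first term is non-negative by Assumption~\ref{ass:dmp-rowsum} and $U_k^n>0$. Each summand of the second term is a product of two non-positive factors, so it is non-negative. The whole flux term is therefore $\ge 0$.

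Substituting into~\eqref{eq:nodal-dmp-full} and using $m_k>0$, $\tau>0$ gives $\theta(U_k^n)\le\theta(U_k^{n-1})$. Since $\theta$ is strictly increasing, $U_k^n\le U_k^{n-1}\le M^{n-1}$, which contradicts the assumption. Hence $U_i^n\le U_k^n\le M^{n-1}$ for all $i\in I$.

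I expect no other obstacle: the mass lumping makes the time-derivative term purely diagonal, so no mean-value linearization (no $D^n$) is needed here.
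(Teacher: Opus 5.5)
Your proposal is correct and follows essentially the same route as the paper. Both take an interior maximizer exceeding $\max_{j\in I}U_j^{n-1}$, split the flux at that node into the row-sum term times $U_k^n$ plus off-diagonal entries times differences, and derive $\theta(U_k^n)\le\theta(U_k^{n-1})$, which contradicts strict monotonicity. The only deviation is your appeal to Theorem~\ref{thm:minimum} for $U^n\ge 0$: it is never used, since interior nodes are handled by maximality and boundary nodes by $U_j^n=0<U_k^n$, so you can drop it and keep the argument independent of the M-matrix step.
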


\begin{proof}
As established in Step 2 of Theorem~\ref{thm:minimum}, the weakly acute mesh Condition and the Local Péclet Condition guarantee that all off-diagonal entries are non-positive: $(S_{\mathrm{full}}^{n-1})_{ij} \le 0$ for all $i\neq j$.

Assume by contradiction that there exists an interior node $i_*\in I$ such that:
$$ U_{i_*}^n = \max_{j\in I\cup\Gamma} U_j^n > \max_{j\in I} U_j^{n-1}. $$
This implies $U_{i_*}^n>0$ and $U_{i_*}^n>U_{i_*}^{n-1}$. Starting from the interior nodal equation \eqref{eq:nodal-dmp-full} and adding and subtracting $U_{i_*}^n\sum_{j\neq i_*}(S_{\mathrm{full}}^{n-1})_{i_*j}$, we obtain:
$$ m_{i_*}\frac{\theta(U_{i_*}^n)-\theta(U_{i_*}^{n-1})}{\tau} + \Biggl(\sum_{j\in I\cup\Gamma}(S_{\mathrm{full}}^{n-1})_{i_*j}\Biggr) U_{i_*}^n = \sum_{j\neq i_*}(S_{\mathrm{full}}^{n-1})_{i_*j}\bigl(U_{i_*}^n-U_j^n\bigr). $$
Since $U_{i_*}^n\ge U_j^n$ and the off-diagonal entries are non-positive, the right-hand side is non-positive. By assumption~\ref{ass:dmp-rowsum} (the row-sum condition) and $U_{i_*}^n>0$, the second term on the left-hand side is $\ge 0$. Therefore, we must have:
$$ m_{i_*}\frac{\theta(U_{i_*}^n)-\theta(U_{i_*}^{n-1})}{\tau} \le 0. $$
Because $m_{i_*}>0$, $\tau>0$, and $\theta$ is strictly increasing, this implies $U_{i_*}^n \le U_{i_*}^{n-1}$, which explicitly contradicts the initial assumption. Hence the upper bound holds.
\end{proof}

\section{Numerical tests}\label{sec:numerical}
This section reports numerical tests for the explicit gravity method and linearly implicit advection schemes. All numerical tests in this section solve the Richards model introduced in Section 1. At each time step, the nonlinear discrete system is solved with Newton's method. The van Genuchten--Mualem constitutive relations are written in terms of $\theta(u)$:
\begin{align}
K_{\text{rel}}(\theta(u)) &= K_s \sqrt{\theta(u)}\left(1 - \left(1 - \theta(u)^{1/m}\right)^m\right)^2, \\
K(u) &= \frac{h_{\text{cap}}}{n-1} \, K_{\text{rel}}(\theta(u)) \, \theta(u)^{-1/m}, \\
\overline{K}(u) &= K_{\text{rel}}(\theta(u)).
\end{align}
Here, $h_{\text{cap}}=1/\alpha$, $m=1-1/n$, $\theta(u) \in [0,1]$ is the effective saturation, $\theta_s$ is the saturated water content, $\theta_r$ is the residual water content, $\phi = \theta_s - \theta_r$ is the effective porosity, and the finite element space is conforming $\mathbb P_1$. All numerical simulations are implemented and executed using the open-source finite element software FreeFEM++ \cite{hecht2012}. At each Newton iteration, the resulting sparse linear systems are solved using the UMFPACK direct solver. The Newton method is configured with a tolerance of $10^{-6}$ and a maximum of $100$ iterations per time step to ensure robust nonlinear convergence, but on average it converges in $1 - 5$ iterations.


\subsection{Test 1: condition not verified while positivity is maintained}
\label{sec:test1}
We first report a single-domain test using the explicit gravity method on $\Omega=(0,100)\times(0,100)$. The homogeneous soil parameters are as follows:
\begin{center}
\begin{tabular}{cccccc}
\toprule
$K_s$ & $\alpha$ & $n$ & $\theta_s$ & $\theta_r$ & $\phi = \theta_s - \theta_r$ \\
\midrule
$1.0$ & $0.02$ & $2.0$ & $0.48$ & $0.08$ & $0.40$ \\
\bottomrule
\end{tabular}
\end{center}
The simulation is run with time step $\tau=1$ and final time $T=5$.
The spatial discretization is conforming $\mathbb P_1$ on a mesh with 20 segments on each boundary side.
The imposed initial and boundary conditions are
\[
\theta(u_0)(x,z)=
\begin{cases}
1, & 0<z<50,\\
0, & 50\le z<100,
\end{cases}
\quad (x,z)\in\Omega,
\]
with $\theta(u)(x,0,t)=1$, $\theta(u)(x,100,t)=0$ for $0<x<100$, and
$K(u)\,\nabla u\cdot n=0$ on $x=0$ and $x=100$.

At each step, we evaluate
$$ \tau_{\mathrm{crit}}^{n-1}=\min_{i:\,G_i^{n-1}<0}\frac{m_i\,\theta(U_i^{n-1})}{|G_i^{n-1}|}, \qquad
\mu_{\min}^{n-1}=\min_i\bigl(m_i\,\theta(U_i^{n-1})+\tau G_i^{n-1}\bigr), $$
and we record $\theta(U)_{\min}$ and $\theta(U)_{\max}$ after convergence.

Table~\ref{tab:first-test-explicit} shows that for all reported steps the explicit condition is not verified
($\mu_{\min}^{n-1}<0$), while the computed solution remains nonnegative
($\theta(U)_{\min}=0$).

\begin{table}[htbp]
\centering
\caption{Test~\ref{sec:test1} using the explicit gravity method: positivity diagnostics and saturation bounds ($\tau=1$, $T=5$).}
\label{tab:first-test-explicit}
\begin{tabular}{cccccc}
\toprule
$t^n$ & $\tau_{\mathrm{crit}}^{n-1}$ & $\mu_{\min}^{n-1}$ & Condition met? & $\theta(U)_{\min}$ & $\theta(U)_{\max}$ \\
\midrule
$1$ & $0$ & $-8.10899\times10^{-1}$ & No & $0$ & $1$ \\
$2$ & $0$ & $-1.00297\times10^{-2}$ & No & $0$ & $1$ \\
$3$ & $0$ & $-5.68438\times10^{-6}$ & No & $0$ & $1$ \\
$4$ & $0$ & $-2.05524\times10^{-22}$ & No & $0$ & $1$ \\
$5$ & $0$ & $-7.45114\times10^{-31}$ & No & $0$ & $1$ \\
\bottomrule
\end{tabular}
\end{table}

In this simulation, the condition is not verified on $5$ out of $5$ steps, and the computed saturation remains nonnegative on those $5$ steps.

\subsection{Test 2: 2D column positivity-violation run using the explicit gravity method}
\label{sec:test2}
This test uses the common constitutive setup above. The test-specific soil parameters, spatial discretization, initial/boundary data, and time step are listed below.
We consider a vertical rectangular column $\Omega = (0, L) \times (0, H)$ with horizontal coordinate $x$ and vertical coordinate $z$, where $L = 50$ and $H = 200$. The domain is discretized with a conforming $\mathbb{P}_1$ triangulation using 20 segments on each horizontal side and 40 segments on each vertical side. The soil parameters are as follows:
\begin{center}
\begin{tabular}{cccccc}
\toprule
$K_s$ & $\alpha$ & $n$ & $\theta_s$ & $\theta_r$ & $\phi = \theta_s - \theta_r$ \\
\midrule
$5.0$ & $0.05$ & $2.0$ & $0.45$ & $0.05$ & $0.40$ \\
\bottomrule
\end{tabular}
\end{center}

The initial condition is a sharp front in effective saturation:
$$ \theta(u_0)(x,z) = \begin{cases} 1.0 & \text{if } z < 0.3H, \\ 0.2 & \text{if } z \ge 0.3H, \end{cases} $$
which corresponds to a fully saturated lower region and an unsaturated upper region. The imposed boundary conditions are
$$
\theta(u)(x,H,t)=0.2,\qquad \theta(u)(x,0,t)=1.0,\qquad 0<x<L,
$$
$$
K(u)\,\nabla u\cdot n=0\qquad \text{on } x=0 \text{ and } x=L.
$$
The strictly positive boundary values are consistent with assumption~\ref{ass:init-boundary} and ensure that the time-step restriction remains well-defined.

The time step is set to $\tau = 5.0$ with final time $T = 50$. At the initial sharp front, the local critical time step is $\tau_{\text{crit}} \approx 0.31$.

\subsubsection{Computed diagnostics}
At each time step $t_n$, we evaluate the following quantities from the discrete solution $U^{n-1}$ to assess compliance with assumption~\ref{ass:strict-timestep}.

\textbf{Explicit advective load.} For each interior node $i \in I$, we compute the gravity-driven load defined in Section~\ref{sec:explicit}:
$G_i^{n-1} = -\int_\Omega \overline{K}(u_h^{n-1})\, \mathbf{e}_z \cdot \nabla \phi_i \, dx$.
This quantity measures the net explicit advective flux at node $i$, assembled from the finite element integrals using the known solution $u_h^{n-1}$.

\textbf{Nodal margin.} Using $G_i^{n-1}$ and the lumped mass $m_i = \int_\Omega \phi_i \, dx$, we form the pointwise margin
$\mu_i^{n-1} := m_i\, \theta(U_i^{n-1}) + \tau\, G_i^{n-1}$.
We track the global minimum $\mu_{\min}^{n-1} = \min_{i \in I} \mu_i^{n-1}$.

\textbf{Critical time step.} For each node where $G_i^{n-1} < 0$, the maximal allowable time step at that node is
$\tau_{\text{crit},i}^{n-1} = \frac{m_i\, \theta(U_i^{n-1})}{|G_i^{n-1}|}$.
The global critical time step is $\tau_{\text{crit}}^{n-1} = \min_{i:\, G_i^{n-1} < 0} \tau_{\text{crit},i}^{n-1}$. The condition $\tau < \tau_{\text{crit}}^{n-1}$ is equivalent to $\mu_{\min}^{n-1} > 0$.

\begin{remark}
In the above, we use $G_i^{n-1}$ rather than the modified load from Section~\ref{sec:explicit}:
$$
\widetilde{G}_i^{n-1} = G_i^{n-1} - \sum_{j \in \Gamma} A_{ij}^{n-1} u_{b,h}(x_j).
$$
The boundary correction $-A_{ij}^{n-1} u_{b,h}(x_j)$ is non-negative.
\end{remark}

\subsubsection{Results}

Table~\ref{tab:explicit-condition} reports $\tau_{\text{crit}}^{n-1}$, $\mu_{\min}^{n-1}$, and the number of interior nodes with $\mu_i^{n-1} < 0$ at each time step. At $t = 5$, the critical time step is $\tau_{\text{crit}} = 0.312$ and $\mu_{\min} = -121.01$. After the first step, $\tau_{\text{crit}}$ is zero in the table because $\theta(U_i^{n-1}) < 0$ at some nodes.

\begin{table}[htbp]
\centering
\caption{Explicit positivity diagnostics at each time step.}
\label{tab:explicit-condition}
\begin{tabular}{ccccc}
\toprule
$t^n$ & $\tau_{\text{crit}}^{n-1}$ & $\mu_{\min}^{n-1}$ & Nodes with $\mu_i^{n-1}<0$ & Condition is satisfied? \\
\midrule
$5$  & $0.311894$ & $-121.006$ & $23$  & No \\
$10$ & $0$        & $-12.1108$ & $122$ & No \\
$15$ & $0$        & $-19.3902$ & $99$  & No \\
$20$ & $0$        & $-19.7793$ & $97$  & No \\
$25$ & $0$        & $-19.6239$ & $98$  & No \\
$30$ & $0$        & $-19.5821$ & $96$  & No \\
$35$ & $0$        & $-19.5722$ & $96$  & No \\
$40$ & $0$        & $-19.5660$ & $95$  & No \\
$45$ & $0$        & $-19.5608$ & $96$  & No \\
$50$ & $0$        & $-19.5564$ & $98$  & No \\
\bottomrule
\end{tabular}
\end{table}

Table~\ref{tab:saturation-bounds} reports $\theta(U)_{\min} = \min_{i \in I} \theta(U_i^n)$ and $\theta(U)_{\max} = \max_{i \in I} \theta(U_i^n)$ after convergence at each time step. The computed range is $\theta(U)_{\min} \in [-0.374, -0.370]$ and $\theta(U)_{\max} = 1.000$ for all reported steps. Hence, in this test using the explicit gravity method the lower limit $0$ is not satisfied, while the upper limit $1$ is attained.

\begin{table}[htbp]
\centering
\caption{Minimal and maximal values of $\theta(U)$ after convergence.}
\label{tab:saturation-bounds}
\begin{tabular}{ccc}
\toprule
$t^n$ & Min $\theta(U)$ & Max $\theta(U)$ \\
\midrule
$5$  & $\mathbf{-0.374172}$ & $1.000$ \\
$10$ & $\mathbf{-0.374110}$ & $1.000$ \\
$15$ & $\mathbf{-0.373987}$ & $1.000$ \\
$20$ & $\mathbf{-0.373781}$ & $1.000$ \\
$25$ & $\mathbf{-0.373478}$ & $1.000$ \\
$30$ & $\mathbf{-0.373069}$ & $1.000$ \\
$35$ & $\mathbf{-0.372552}$ & $1.000$ \\
$40$ & $\mathbf{-0.371929}$ & $1.000$ \\
$45$ & $\mathbf{-0.371205}$ & $1.000$ \\
$50$ & $\mathbf{-0.370387}$ & $1.000$ \\
\bottomrule
\end{tabular}
\end{table}

To verify that the discrete minimum principle holds when the explicit time step restriction is satisfied, we re-run this exact configuration but with a reduced time step $\tau = 0.25$. As shown in Table~\ref{tab:explicit-verification}, this choice satisfies $\tau \le \tau_{\mathrm{crit}}^{n-1}$ at all reported times, maintaining a strictly positive margin $\mu_{\min}^{n-1}$. Consequently, the computed saturation $\theta(U)_{\min}$ remains bounded by the theoretical minimum value of $0.2$, confirming that the negative values observed in the simulation using $\tau = 5.0$ are purely an artifact of violating the conditional stability bound on the time step.

\begin{table}[htbp]
\centering
\caption{Explicit gravity method verification with stable time step $\tau = 0.25$.}
\label{tab:explicit-verification}
\begin{tabular}{cccccc}
\toprule
$t^n$ & $\tau_{\mathrm{crit}}^{n-1}$ & $\mu_{\min}^{n-1}$ & Condition met? & $\theta(U)_{\min}$ & $\theta(U)_{\max}$ \\
\midrule
$0.25$ & $0.311894$ & $0.127551$ & Yes & $0.200$ & $1.000$ \\
$0.50$ & $2.47070$ & $0.127551$ & Yes & $0.200$ & $1.000$ \\
$0.75$ & $2.87201$ & $0.127551$ & Yes & $0.200$ & $1.000$ \\
$1.00$ & $1.94169$ & $0.127551$ & Yes & $0.200$ & $1.000$ \\
$1.25$ & $1.59382$ & $0.127551$ & Yes & $0.200$ & $1.000$ \\
$1.50$ & $1.42337$ & $0.127551$ & Yes & $0.200$ & $1.000$ \\
$1.75$ & $1.32307$ & $0.127551$ & Yes & $0.200$ & $1.000$ \\
$2.00$ & $1.25672$ & $0.127551$ & Yes & $0.200$ & $1.000$ \\
$2.25$ & $1.20937$ & $0.127551$ & Yes & $0.200$ & $1.000$ \\
$2.50$ & $1.17378$ & $0.127551$ & Yes & $0.200$ & $1.000$ \\
\bottomrule
\end{tabular}
\end{table}

\subsection{Comparison test: linearly implicit advection scheme}
This comparison reuses exactly the setup of Test~\ref{sec:test2}: same soil parameters, domain, spatial discretization, initial/boundary data, and time stepping ($\tau=5.0$, $T=50$).
The imposed initial and boundary conditions are therefore
$$
\theta(u_0)(x,z) =
\begin{cases}
1.0, & z < 0.3H,\\
0.2, & z \ge 0.3H,
\end{cases}
$$
$$
\theta(u)(x,H,t)=0.2,\qquad \theta(u)(x,0,t)=1.0,\qquad K(u)\,\nabla u\cdot n=0\ \text{on } x=0,L.
$$
We re-run the same test using the linearly implicit advection scheme~\eqref{eq:scheme} from Section~\ref{sec:implicit}. This scheme replaces the explicit load $\int_\Omega \overline{K}(u_h^{n-1})\, \mathbf{e}_z \cdot \nabla v_h$ by the bilinear form $\int_\Omega \beta_h^{n-1}\, u_h^n\, \mathbf{e}_z \cdot \nabla v_h$, where $\beta_h^{n-1} = \overline{K}(u_h^{n-1}) / u_h^{n-1}$.

Table~\ref{tab:comparison} reports the minimal saturation $\theta(U)_{\min}$ after convergence for both schemes at each time step. The explicit scheme gives $\theta(U)_{\min} \approx -0.374$. The linearly implicit advection scheme gives $\theta(U)_{\min} = 0.2$. In the same simulation, both schemes have $\theta(U)_{\max}=1.0$ at all reported times. Both the minimal and maximal saturation values are therefore monitored at each reported step in this comparison. The explicit scheme violates the lower limit $0$, whereas the linearly implicit scheme stays within $[0.2,1.0]$. Figure~\ref{fig:positivity-profile} plots the vertical saturation profile along the centerline at the final time.

\begin{table}[htbp]
\centering
\caption{Comparison of minimal saturation $\theta(U)_{\min}$ between explicit and linearly implicit advection schemes.}
\label{tab:comparison}
\begin{tabular}{ccc}
\toprule
$t^n$ & Explicit $\theta(U)_{\min}$ & Linearly implicit $\theta(U)_{\min}$ \\
\midrule
$5$  & $\mathbf{-0.374172}$ & $0.200$ \\
$10$ & $\mathbf{-0.374110}$ & $0.200$ \\
$15$ & $\mathbf{-0.373987}$ & $0.200$ \\
$20$ & $\mathbf{-0.373781}$ & $0.200$ \\
$25$ & $\mathbf{-0.373478}$ & $0.200$ \\
$30$ & $\mathbf{-0.373069}$ & $0.200$ \\
$35$ & $\mathbf{-0.372552}$ & $0.200$ \\
$40$ & $\mathbf{-0.371929}$ & $0.200$ \\
$45$ & $\mathbf{-0.371205}$ & $0.200$ \\
$50$ & $\mathbf{-0.370387}$ & $0.200$ \\
\bottomrule
\end{tabular}
\end{table}

\begin{figure}[htbp]
    \centering
    \includegraphics[width=0.65\textwidth]{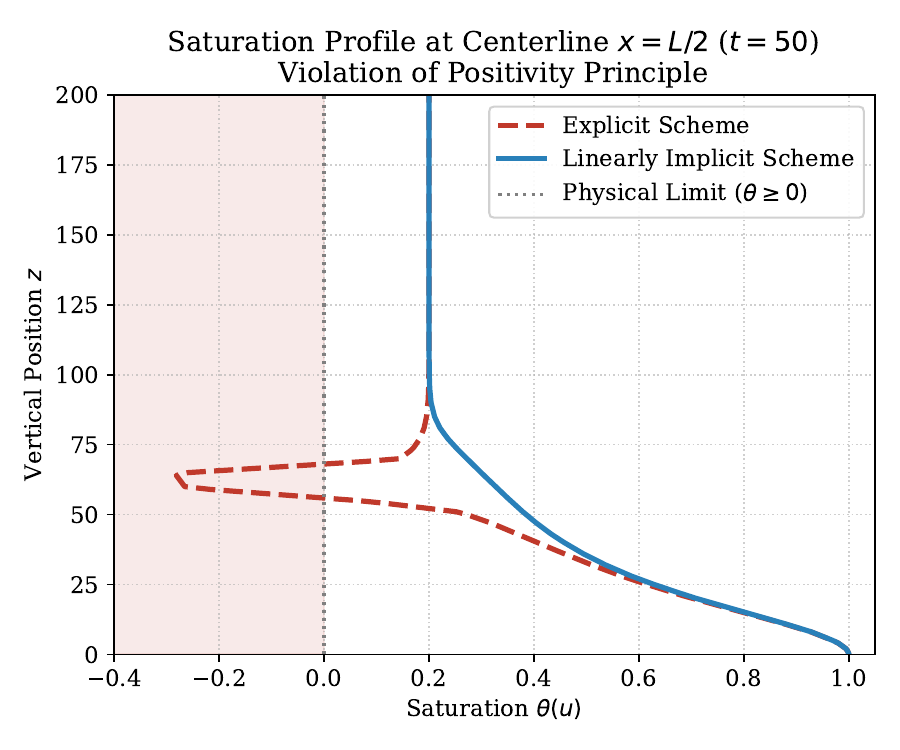}
    \caption{Vertical saturation profile along the centerline $x=L/2$ at the final time $t=50$.}
    \label{fig:positivity-profile}
\end{figure}

\subsection{Test 3: both assumptions verified — discrete maximum principle holds}
\label{sec:test-verified}
Theorem~\ref{thm:maximum} guarantees the discrete maximum principle when both assumption~\ref{ass:local-peclet} (cell Péclet condition) and assumption~\ref{ass:dmp-rowsum} (non-negative row-sum condition) are satisfied. This test constructs a scenario in which both assumptions hold simultaneously, thereby verifying the theorem in its positive direction.

The test uses a structured $\mathbb{P}_1$ mesh (all angles $\le 90^\circ$, guaranteeing the nonnegative-type property of the stiffness matrix), with the following parameters:
\begin{center}
\begin{tabular}{cccccc}
\toprule
$K_s$ & $\alpha$ & $n$ & $\theta_s$ & $\theta_r$ & $\phi = \theta_s - \theta_r$ \\
\midrule
$1.0$ & $0.01$ & $2.0$ & $0.45$ & $0.05$ & $0.40$ \\
\bottomrule
\end{tabular}
\end{center}
The small value $\alpha = 0.01$ yields a large capillary suction scale $h_{\text{cap}} = 1/\alpha = 100$, making diffusion strongly dominant and ensuring the maximal cell Péclet number $Pe_c \ll 1$ across the entire domain and time interval.

The domain is $\Omega = [0,50]\times[0,100]$ with $\tau = 1.0$ and $T = 20$. A monotone Dirichlet profile is imposed: the bottom boundary is kept wet ($\theta = 0.8$), the top boundary dry ($\theta = 0.2$), and the sides follow the corresponding linear profile. The initial condition is the same linear profile. Under this monotone configuration, the advective coefficient $\beta_h^{n-1}$ decreases in the upward direction, so the discrete row-sum
$$
\sum_{j \in I \cup \Gamma} S_{\text{full},ij} = \int_\Omega \beta_h^{n-1} \mathbf{e}_z \cdot \nabla \phi_i \, dx > 0
$$
is strictly positive for every interior node $i \in I$ at every time step.

Figure~\ref{fig:verified-dmp} reports the three diagnostics over $t \in [1, 20]$. Panel~(a) confirms that the minimal row-sum over interior nodes remains strictly positive throughout the simulation. Panel~(b) confirms that the maximal cell Péclet number stays well below~$1$. Panel~(c) shows that the computed saturation remains exactly within $[0.2, 0.8]$ at all times, verifying that the discrete maximum principle is preserved as predicted by Theorem~\ref{thm:maximum}.

\begin{figure}[htbp]
    \centering
    \includegraphics[width=\textwidth]{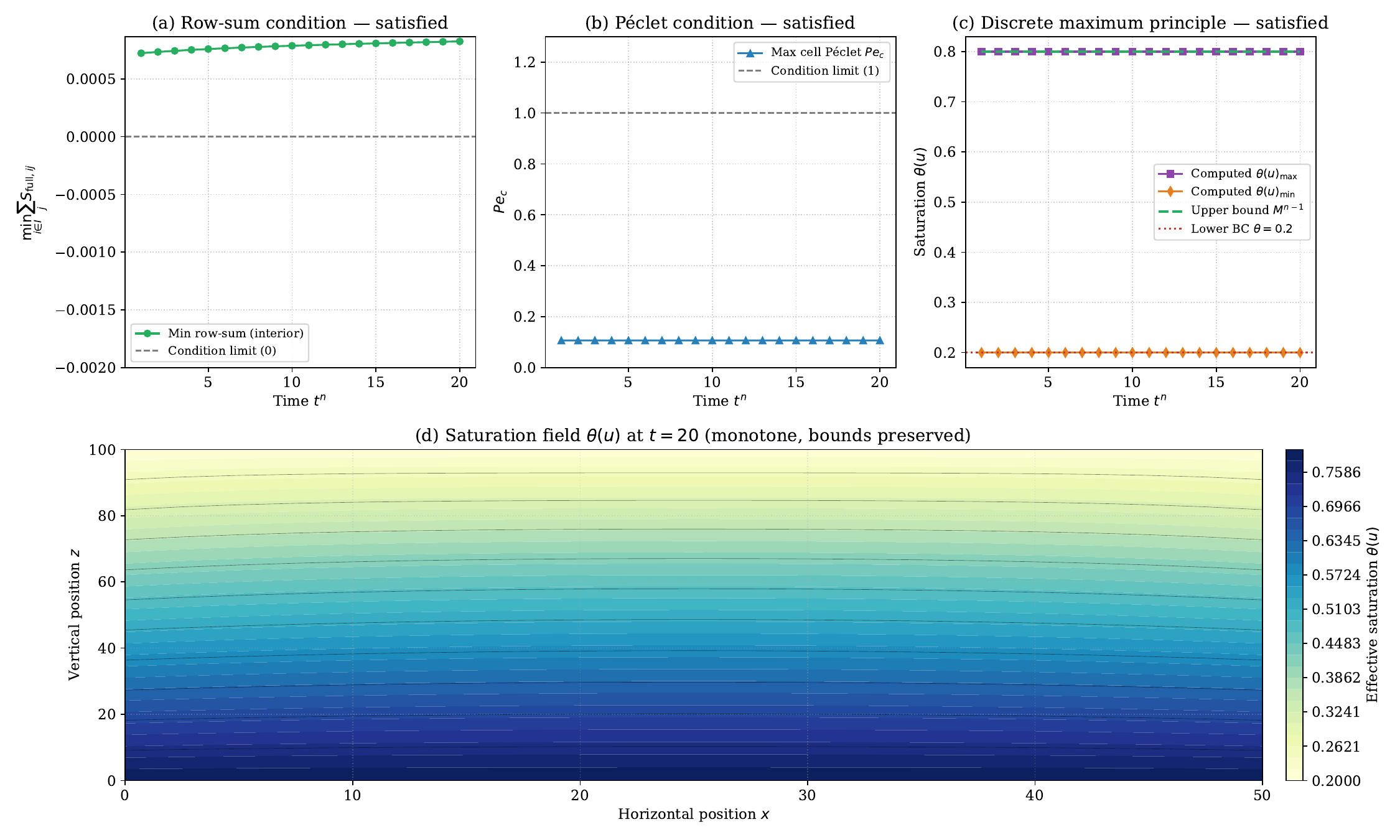}
    \caption{Test~\ref{sec:test-verified}: Both assumptions satisfied. (a) Minimal row-sum over interior nodes (strictly positive throughout). (b) Maximal cell Péclet number (well below~$1$ throughout). (c) Computed saturation bounds $[\theta_{\min},\theta_{\max}]$ remain within the Dirichlet values $[0.2, 0.8]$, confirming the discrete maximum principle. (d) Saturation field $\theta(u)$ at $t=20$: smooth monotone profile from wet bottom ($\theta=0.8$) to dry top ($\theta=0.2$), with all values within the prescribed bounds.}
    \label{fig:verified-dmp}
\end{figure}

\subsection{Test 4: discrete maximum principle and row-sum condition violation}
\label{sec:test-violation}
Theorem~\ref{thm:maximum} shows that the linearly implicit advection scheme satisfies the discrete maximum principle, under assumption~\ref{ass:dmp-rowsum}. This assumption requires non-negative row sums in the system matrix, so $B^n$ is an M-matrix. Geometrically, this restricts the discrete divergence of the advective velocity field $\beta_h^{n-1} \mathbf{e}_z$.

This test uses the same soil parameter set as Test~\ref{sec:test2}, but a different time step: $\tau = 1.0$. The parameters are as follows:

\begin{center}
\begin{tabular}{cccccc}
\toprule
$K_s$ & $\alpha$ & $n$ & $\theta_s$ & $\theta_r$ & $\phi = \theta_s - \theta_r$ \\
\midrule
$5.0$ & $0.05$ & $2.0$ & $0.45$ & $0.05$ & $0.40$ \\
\bottomrule
\end{tabular}
\end{center}
The spatial discretization is conforming $\mathbb P_1$ on a mesh with 20 horizontal and 40 vertical boundary segments, and the reported run uses final time $T=20$.
The imposed initial and boundary conditions are
$$
\theta(u_0)(x,z)=
\begin{cases}
1.0, & 0.55H \le z \le 0.75H,\\
0.2, & \text{otherwise},
\end{cases}
$$
$$
\theta(u)(x,z,t)=0.2\qquad \text{for } (x,z)\in\partial\Omega.
$$
Under these conditions, the row-sum
$$ \sum_{j \in I \cup \Gamma} S_{\text{full},ij} = \int_\Omega \beta_h^{n-1} \, \mathbf{e}_z \cdot \nabla \phi_i \, dx $$
becomes negative for nodes near the front, violating assumption~\ref{ass:dmp-rowsum}.

Figure~\ref{fig:dmp-contour} shows the spatial distribution of the saturation patch at the final time $t=20$. 

\begin{figure}[htbp]
    \centering
    \includegraphics[width=\textwidth]{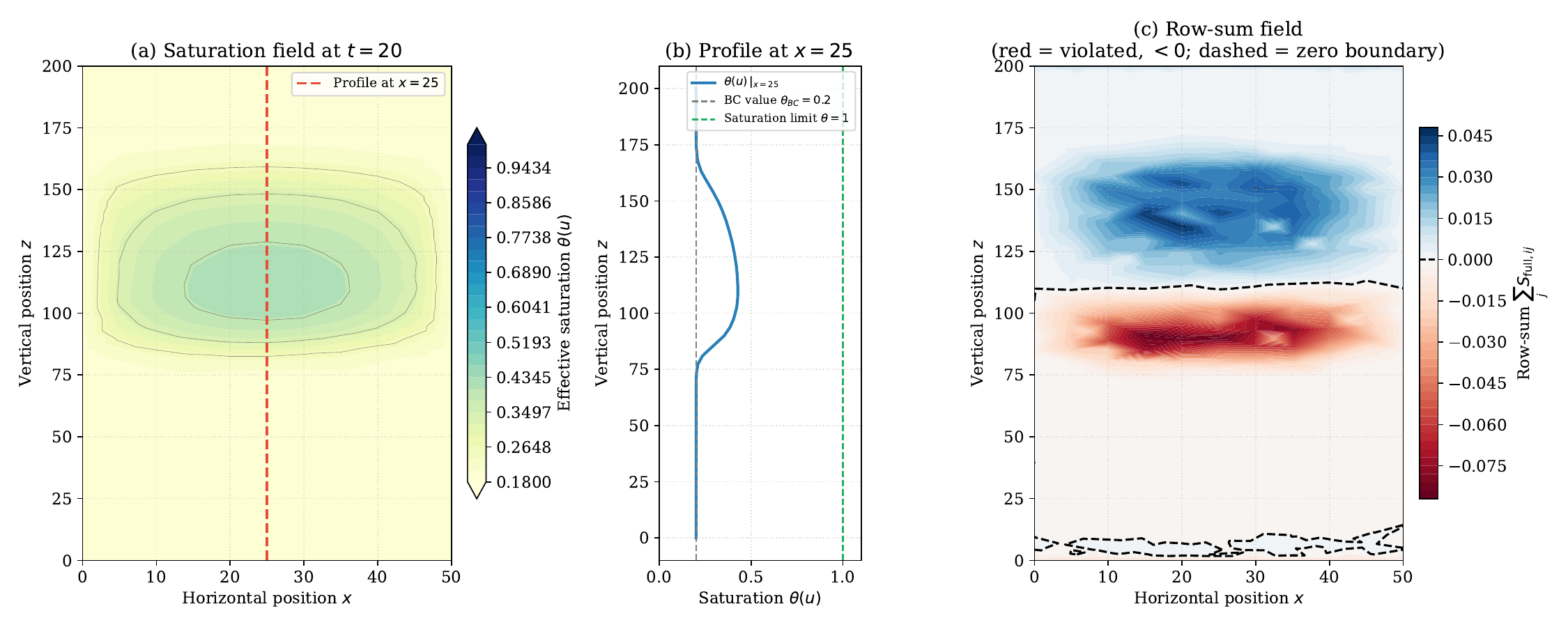}
    \caption{Contour plot of spatial saturation $\theta(u)$ at $t=20$.}
    \label{fig:dmp-contour}
\end{figure}

Figure~\ref{fig:dmp-test} presents the matrix diagnostics. Panel (a) plots the minimal row-sum across all interior nodes at each time step. The row-sum is negative for the reported steps.

\begin{figure}[htbp]
    \centering
    \includegraphics[width=\textwidth]{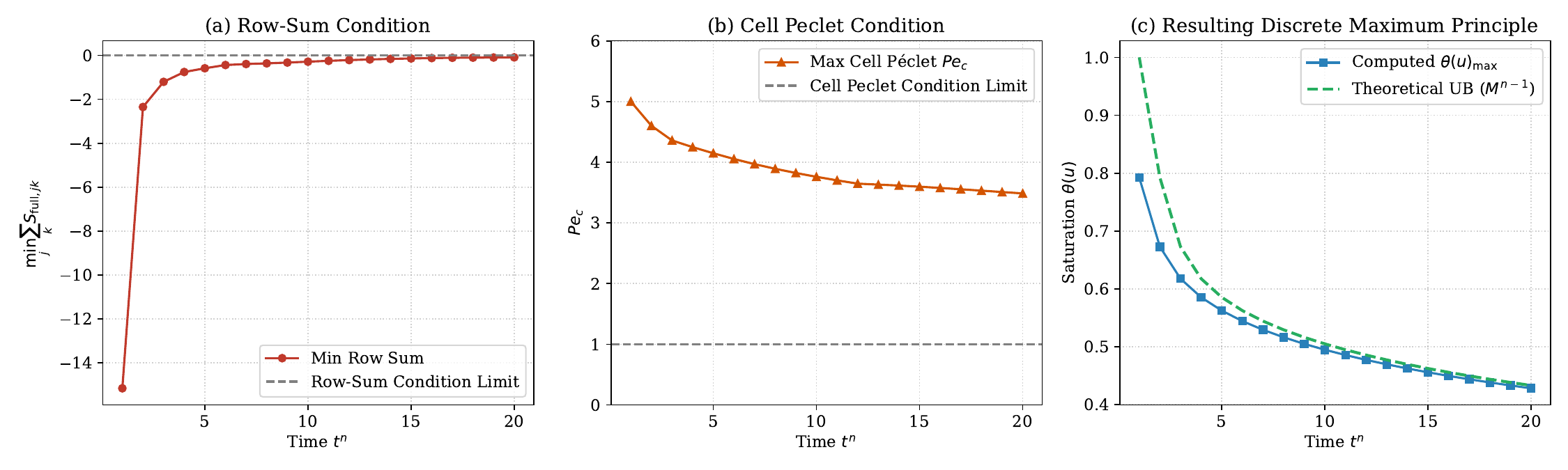}
    \caption{Time evolution of the linearly implicit advection scheme. (a) Minimum row-sum across the domain. (b) Maximal cell Péclet number $Pe_T$ (cell Péclet condition). (c) Evolution of the maximal computed saturation and of the upper bound $M^{n-1}$.}
    \label{fig:dmp-test}
\end{figure}

Panel (c) shows that the maximal computed saturation $\theta(u)_{\max}^n$ decreases over time and stays below the dynamic upper bound
$$ M^{n-1} := \max\big(\max_{j \in I} \theta(U_j^{n-1}), \max_{j \in \Gamma} \theta(U_{BC, j}^n)\big), $$
where $U_{BC,j}^n$ denotes the prescribed Dirichlet value at boundary node $j$.

From the numerical history in this test, the extrema over $t \in [1,20]$ are $\theta(u)_{\min}=0.200$ and $\theta(u)_{\max}=0.792383$. Therefore, both limits $0$ and $1$ are satisfied in this case.

\subsection{Test 5: advection-dominated regime}
\label{sec:test-advection}
This subsection reports a one-dimensional advection-dominated test on the interval $\Omega=(0,H)$ with $H=200$.

This test uses a different setup from Test~\ref{sec:test-verified} for domain, mesh, and final time.
It uses conforming $\mathbb P_1$ discretization with $N=40$ mesh points ($N-1=39$ segments), time step $\tau=1.0$, and final time $T=10$.
The parameters $n=2$, $\theta_s=0.45$, and $\theta_r=0.05$ are reused from Test~\ref{sec:test2} and Test~\ref{sec:test-verified}, while $K_s$ and $\alpha$ are modified to construct a more extreme advection-dominated case. Specifically, the parameter $\alpha$ is taken to be $1.0$, which is $20$ times larger than the value used in the previous tests ($\alpha=0.05$). This modification significantly decreases the capillary diffusion scale, severely driving the system into an advection-dominated regime:
\begin{center}
\begin{tabular}{cccccc}
\toprule
$K_s$ & $\alpha$ & $n$ & $\theta_s$ & $\theta_r$ & $\phi = \theta_s - \theta_r$ \\
\midrule
$10.0$ & $1.0$ & $2.0$ & $0.45$ & $0.05$ & $0.40$ \\
\bottomrule
\end{tabular}
\end{center}
With this choice, the capillary rise is $h_{\text{cap}}=1/\alpha=1.0$.
The imposed initial and boundary conditions are
$$
\theta(u_0)(z)=
\begin{cases}
1.0, & z>H/2,\\
0.2, & z\le H/2,
\end{cases}
$$
$$
\theta(u)(0,t)=0.2,\qquad \theta(u)(H,t)=1.0.
$$

Figure~\ref{fig:overshoot-plot} shows the results of this test. Panel (a) reports negative row-sum values (minimal value $-5.563$ over $t\in[1,10]$). Panel (b) reports a maximal Péclet indicator of $16.743$. Panel (c) reports the computed maximum of $\theta$ reaching values above $6$.

\begin{figure}[htbp]
    \centering
    \includegraphics[width=\textwidth]{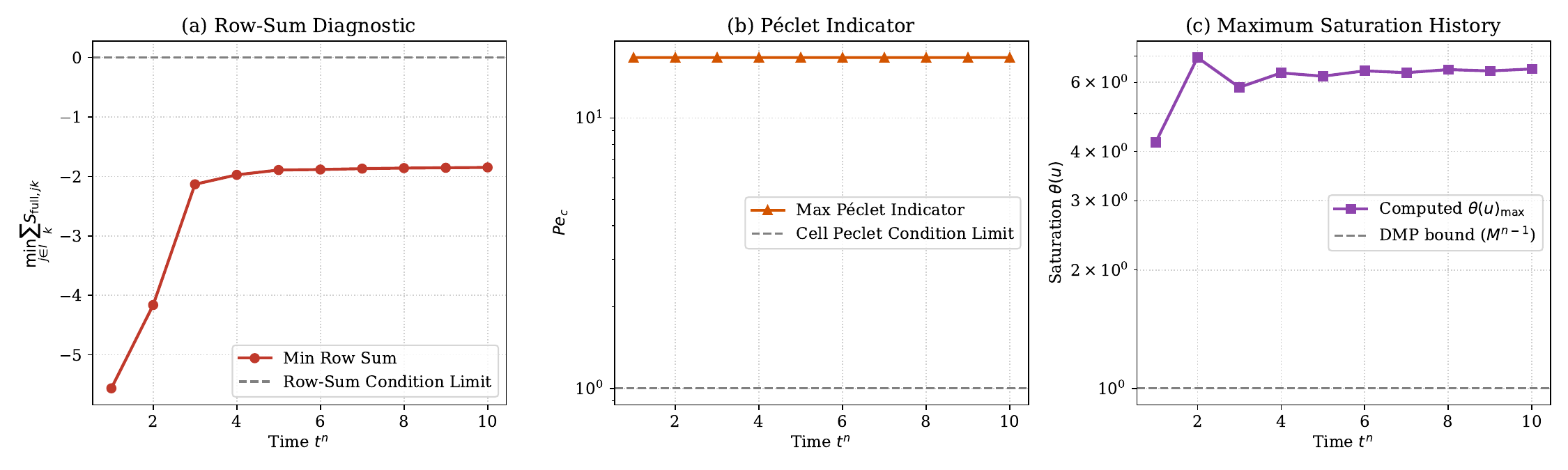}
    \caption{Simulation of an advection-dominated regime ($\alpha=1.0$, $K_s=10$). (a) Minimal row-sum across the interval. (b) Maximal Péclet indicator. (c) Evolution of the maximal computed saturation and the upper bound.}
    \label{fig:overshoot-plot}
\end{figure}

For this numerical simulation, the recorded extrema over $t\in[1,10]$ are $\theta(u)_{\min}=-1.509$ and $\theta(u)_{\max}=6.937$.
Both limits ($0$ and $1$) are violated in this advection-dominated test.

To test whether reducing $h$ can recover the DMP, we ran a mesh-refinement sweep with the same parameters over two time steps ($T=2$, $\tau=1$).
We used $\tau=1$ and final time $T=2$ (two time steps).
The imposed initial and boundary conditions are the same as in Test~\ref{sec:test-advection},
$$
\theta(u_0)(z)=
\begin{cases}
1.0, & z>H/2,\\
0.2, & z\le H/2,
\end{cases}
\qquad
\theta(u)(0,t)=0.2,\ \theta(u)(H,t)=1.0.
$$
We define $N$ as the number of mesh points on $(0,H)$.
The tested values are
$N = 40, 80, 160, 400$.
The effective mesh size is
$$
h_{\mathrm{eff}}=\frac{H}{N-1}=\frac{200}{N-1}.
$$
For each run, we verify the saturation extrema and report the maximal Péclet indicator,
$\min\theta(U)$, and $\max\theta(U)$ over $t\in[1,2]$.

Table~\ref{tab:overshoot-short-sweep} reports that the maximal P\'eclet indicator decreases as $h$ decreases. In this two-time-step sweep, the reported values satisfy $\max\theta(U)>1$ for all listed meshes, while the lower bound $\min\theta(U)<0$ is violated until the mesh is refined to $N=400$.

\begin{table}[htbp]
\centering
\caption{Mesh-refinement sweep for the advection-dominated test over two time steps ($T=2$, $\tau=1$); all reported quantities are over $t\in[1,2]$.}
\label{tab:overshoot-short-sweep}
\begin{tabular}{cccccc}
\toprule
Mesh $N$ & $h_{\mathrm{eff}}$ & Max $Pe_T$ & Min $\theta(U)$ & Max $\theta(U)$ & Max principle? \\
\midrule
$40$  & $5.128$ & $16.743$ & $-1.509$ & $6.937$  & No \\
$80$  & $2.532$ & $4.080$  & $-1.493$ & $8.705$  & No \\
$160$ & $1.258$ & $1.007$  & $-1.777$ & $11.070$ & No \\
$400$ & $0.501$ & $0.160$  & $0.200$  & $14.567$ & No \\
\bottomrule
\end{tabular}
\end{table}

This table shows that the maximal Péclet indicator decreases as the mesh is refined, from $16.743$ at $N=40$ to $0.160$ at $N=400$. For all listed meshes, $\max\theta(U)>1$. The lower bound is violated until $N=400$, where the reported values satisfy $\min\theta(U)\ge 0.2$. This suggests that the discrete minimum principle can be recovered by mesh refinement, but the upper bound is not attained in this test even at $N=400$. This confirms that the discrete maximum principle is not satisfied if the row sums are not positive.

\subsection{Discussion}
The numerical experiments presented in this section systematically validate the theoretical predictions across multiple flow regimes and clarify the distinction between sufficient and necessary conditions.

\subsubsection{Explicit gravity method: conditions are sufficient but not necessary}
The explicit gravity scheme (Section~\ref{sec:explicit}) is proven to preserve positivity under assumption~\ref{ass:strict-timestep}, which imposes a critical time-step bound $\tau_{\text{crit}}^{n-1}$ that vanishes near the degenerate limit. Test~\ref{sec:test1} demonstrates an important practical point: this condition is \emph{sufficient but not strictly necessary}. Despite violating the stability bound across all five time steps ($\mu_{\min}^{n-1} < 0$), the saturation remains bounded within $[0, 1]$. This suggests that in certain problem configurations, the explicit scheme can maintain positivity through compensating nonlinear effects, even when the algebraic condition is violated.

However, Test~\ref{sec:test2} shows that we shouldn't rely on this behavior. With identical spatial discretization but different soil parameters and a coarser time step ($\tau = 5.0$ vs.\ $\tau_{\text{crit}} \approx 0.31$), the scheme produces catastrophic negative saturations ($\theta(U)_{\min} \approx -0.374$). Crucially, the explicit verification run using a stable time step ($\tau = 0.25 \le \tau_{\text{crit}}^{n-1}$) confirms that when the theoretical bound is strictly respected, positivity is guaranteed, with computed saturation remaining bounded within $[0.2, 1.0]$. This pair of tests establishes that while the sufficient condition may occasionally be violated without failure, \emph{respecting the condition provides a rigorous guarantee} for practical use.

\subsubsection{Linearly implicit advection removes severe time-step restrictions}
The comparison test (rerunning Test~\ref{sec:test2} with the linearly implicit scheme) demonstrates the primary motivation for the linearly implicit treatment of gravity. Using the identical coarse time step ($\tau = 5.0$) that caused the explicit method to catastrophically violate bounds, the linearly implicit scheme maintains bounded solutions throughout the entire simulation ($\theta(U)_{\min} = 0.2$, $\theta(U)_{\max} = 1.0$). This represents the central practical advantage: the linearly implicit scheme circumvents the explicit method's crippling time-step restriction near degenerate regimes without transitioning to fully implicit, computationally expensive nonlinear solvers. The ability to use 20$\times$ coarser time steps while maintaining bounds represents a significant computational savings.

\subsubsection{Both conditions satisfied: discrete maximum principle holds}
Test~\ref{sec:test-verified} constructs a diffusion-dominated regime ($\alpha = 0.01$, yielding $h_{\text{cap}} = 100 \gg 1$) where both assumption~\ref{ass:local-peclet} and assumption~\ref{ass:dmp-rowsum} are satisfied throughout the simulation. The diagnostics confirm: (a) minimal row-sum strictly positive at all times, (b) maximal cell P\'eclet number well below 1. Under these conditions, Theorem~\ref{thm:maximum} guarantees the discrete maximum principle, and the numerical results confirm this: computed saturations remain exactly within the prescribed boundary values $[0.2, 0.8]$ at all times. This test directly validates the sufficiency of the theoretical conditions in the favorable regime where both hold.

\subsubsection{Row-sum condition: sufficient but not necessary}
Test~\ref{sec:test-violation} deliberately constructs a regime where the row-sum condition is violated (minimum row-sum $< 0$ due to divergent advective fluxes near the wetting front). Despite this violation, both the minimal and maximal saturations remain within the physical bounds $[0, 1]$. Like Test~\ref{sec:test1} for the explicit scheme, this demonstrates that the row-sum condition is \emph{sufficient but not strictly necessary} for physical bounds to hold in practice. Nonlinear stability and geometric features of the problem can compensate for violations of the algebraic conditions.

\subsubsection{P\'eclet condition and spatial resolution}
Test~\ref{sec:test-advection} explores an extreme advection-dominated regime ($\alpha = 1.0$, $K_s = 10$) with massive cell P\'eclet numbers (up to 16.7) that drastically violate the local P\'eclet condition. The resulting computed solutions violate both bounds severely ($\theta_{\min} \approx -1.51$, $\theta_{\max} \approx 6.94$). The mesh-refinement sweep (Table~\ref{tab:overshoot-short-sweep}) reveals a critical asymmetry:
\begin{enumerate}
    \item \textbf{Lower bound recovery:} As the mesh is refined from $N=40$ to $N=400$, the maximum P\'eclet number decreases from 16.7 to 0.16. Simultaneously, the minimum saturation recovers from $-1.51$ to $0.2$ (within the prescribed bound). This demonstrates that the P\'eclet condition violation is primarily a spatial resolution issue---sufficient refinement can recover the lower bound.
    \item \textbf{Upper bound persistence:} Even at the finest resolution ($N=400$), the maximum saturation remains severely violated ($\theta_{\max} \approx 14.6 \gg 1$). This upper bound violation persists despite the P\'eclet number being driven below the theoretical threshold, indicating that the \emph{row-sum condition is a separate geometric constraint} that cannot be recovered purely through mesh refinement.
\end{enumerate}
This test clarifies the distinct roles of the two conditions: the P\'eclet condition governs convective stability (requiring spatial resolution), while the row-sum condition governs geometric constraint (requiring appropriate mesh geometry or problem structure).

\subsubsection{Summary of theoretical vs.\ practical conditions}
The test suite reveals an important distinction between theory and practice:
\begin{itemize}
    \item \textbf{Sufficiency:} All theoretical conditions (explicit time-step bound, P\'eclet condition, row-sum condition) are proven to be \textbf{sufficient}. When satisfied, physical bounds are guaranteed.
    \item \textbf{Necessity:} Individual conditions are \textbf{not necessary}. Tests demonstrate that bounds can be maintained in specific problem configurations even when conditions are violated.
    \item \textbf{Robustness:} When any single condition is violated, the method may still work on particular problems but offers no guarantee. For such cases, further refinement or problem-specific analysis is warranted.
\end{itemize}

\section*{Acknowledgments}

This work was supported by an NSERC, Canada Discovery Grant (RGPIN-2019-06855) to Yves Bourgault 
and an NSERC, Canada Discovery Grant (RGPIN/5220-2022 \& DGECR/526-2022) to Abdelaziz Beljadid.

\bibliographystyle{siamplain}
\bibliography{references}

\end{document}